\definecolor{cite}{rgb}{0.50,0.00,1.00}
\definecolor{url}{rgb}{0.00,0.50,0.75}
\definecolor{link}{rgb}{0.00,0.00,0.50}
\theoremstyle{definition} 
\newtheorem{Unity}{Unity}[section] 
\newtheorem*{defn*}{Definition} 
\newtheorem{defn}[Unity]{Definition} 
\theoremstyle{plain} 
\newtheorem*{thm*}{Theorem}
\newtheorem{thm}[Unity]{Theorem}
\newtheorem*{cor*}{Corollary}
\newtheorem{cor}[Unity]{Corollary}
\newtheorem{lem}[Unity]{Lemma}
\theoremstyle{remark} 
\newtheorem*{rmk*}{Remark}
\newtheorem{rmk}[Unity]{Remark}
\numberwithin{Unity}{section}
\newcommand{\diff}{{\rm d}}
\newcommand{\surj}{\twoheadrightarrow}
\newcommand{\Char}{{\rm char\ }}
\newcommand{\Spec}{{\rm Spec \,}}
\newcommand{\rank}{{\rm rank\ }}
\newcommand{\Pic}{{\rm Pic}}
\newcommand{\sE}{\mathscr{E}}
\newcommand{\sF}{\mathscr{F}}
\newcommand{\sL}{\mathscr{L}}
\begin{document}
\title{On the stability of tangent bundles on cyclic coverings}
\author[Yongming Zhang]{Yongming Zhang}
\email{zhangyongming@amss.ac.cn}
\address{School of Mathematical Sciences, Fudan University, Shanghai, 200433,
	China}
\maketitle
\begin{abstract}
Let $Y$ be a smooth projective surface defined over an algebraically closed field $k$ with  $\Char k\nmid n$, and let $\pi:X\rightarrow Y$ be a $n$-cyclic covering  branched along a smooth divisor $B$. We show that under some conditions $\mathscr{T}_X$ is semi-stable with respect to $\pi^*\mathcal {H}$ if the tangent bundle $\mathscr{T}_Y$ is semi-stable with respect to some ample line bundle $\mathcal {H}$ on $Y$.
\end{abstract}
\section{Introduction}

In this paper we correct some results in \cite{Zhang} and improve them to cyclic coverings.
Let $\pi:X\rightarrow Y$ be a finite covering defined over $k$. We know that the pull back of semi-stable sheaves under $\pi^*$ are semi-stable in characteristic zero(see \cite[lemma 1.1]{GD}), but it does not generally hold that the pull back of stable sheaves are stable. In some special cases, the pull back of stable sheaves are stable:
\begin{thm}(Theorem \ref{thm3.2})
Let $Y$ be a smooth projective variety over $k$  with $\Char k\nmid n $ and let $\pi: X\rightarrow Y$ be a $n$-cyclic covering branched along a smooth effective divisor $B\in \Gamma(Y,\mathcal {L}^{\otimes n})$ for some line bundle $\mathcal {L}\in \Pic(Y)$. If $\mathcal {F}$ is locally free sheaf on $Y$ and (semi-)stable with respect to an ample line bundle $\mathcal {H}$ on $Y$ then $\pi^*\mathcal {F}$ is (semi-)stable with respect to the ample line bundle $\pi^*\mathcal {H}$.
\end{thm}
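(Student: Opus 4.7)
The plan is to exploit the Galois action of the cyclic group $G=\mathbb{Z}/n\mathbb{Z}$ together with the canonical decomposition $\pi_*\mathcal{O}_X=\bigoplus_{i=0}^{n-1}\mathcal{L}^{-i}$ of the cyclic covering, in which $G$ acts on the $i$-th summand through a fixed character $\zeta^i$ (after, if necessary, extending $k$ to contain a primitive $n$-th root of unity $\zeta$, which preserves (semi-)stability). Because $\mathrm{char}(k)\nmid n$, $|G|=n$ is invertible in $k$, so $G$-invariants is exact and isotypic decompositions of $G$-equivariant sheaves are available. I will use repeatedly the projection-formula identity $\mu_{\pi^*\mathcal{H}}(\pi^*\mathcal{F})=n\mu_\mathcal{H}(\mathcal{F})$ and the push--pull identity
$$\mu_\mathcal{H}(\pi_*\mathcal{E})=\frac{1}{n}\mu_{\pi^*\mathcal{H}}(\mathcal{E})-\frac{n-1}{2}\deg_\mathcal{H}(\mathcal{L})$$
for a torsion-free sheaf $\mathcal{E}$ on $X$, the latter coming from $c_1(\pi_*\mathcal{O}_X)=-\frac{n(n-1)}{2}c_1(\mathcal{L})$ together with the formula $c_1(\pi_*\mathcal{E})=\pi_*c_1(\mathcal{E})+\mathrm{rank}(\mathcal{E})c_1(\pi_*\mathcal{O}_X)$.

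For semistability I argue by contradiction. Assume $\pi^*\mathcal{F}$ is not $\mu_{\pi^*\mathcal{H}}$-semistable, and let $\mathcal{G}\subset\pi^*\mathcal{F}$ be the maximal destabilizing subsheaf in the Harder--Narasimhan filtration. Since $\pi^*\mathcal{F}$ is naturally $G$-equivariant and the HN filtration is unique, $g^*\mathcal{G}=\mathcal{G}$ for every $g\in G$, so $\mathcal{G}$ is itself $G$-equivariant. Applying $\pi_*$ gives a $G$-equivariant embedding
$$\pi_*\mathcal{G}\hookrightarrow\pi_*\pi^*\mathcal{F}=\bigoplus_{i=0}^{n-1}\mathcal{F}\otimes\mathcal{L}^{-i},$$
and decomposing into isotypic components produces subsheaves $\mathcal{G}^{(i)}\subset\mathcal{F}\otimes\mathcal{L}^{-i}$ with $\pi_*\mathcal{G}=\bigoplus_i\mathcal{G}^{(i)}$. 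A regular-representation count at any point of $Y\setminus B$ (where $\pi$ is étale) shows that every $\mathcal{G}^{(i)}$ has the same generic rank as $\mathcal{G}$. The $\mu$-semistability of each $\mathcal{F}\otimes\mathcal{L}^{-i}$ then yields $\mu(\mathcal{G}^{(i)})\leq\mu(\mathcal{F})-i\deg_\mathcal{H}(\mathcal{L})$, and summing and dividing by the total rank $n\,\mathrm{rank}(\mathcal{G})$ gives
$$\mu(\pi_*\mathcal{G})\leq\mu(\mathcal{F})-\frac{n-1}{2}\deg_\mathcal{H}(\mathcal{L}).$$
Combined with the push--pull identity this is precisely $\mu(\mathcal{G})\leq n\mu(\mathcal{F})=\mu(\pi^*\mathcal{F})$, contradicting the destabilizing choice of $\mathcal{G}$.

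For stability, assuming $\mathcal{F}$ is stable, the previous step gives $\pi^*\mathcal{F}$ semistable; if it were not stable one would produce a proper saturated $G$-equivariant subsheaf $\mathcal{G}\subsetneq\pi^*\mathcal{F}$ with $\mu(\mathcal{G})=\mu(\pi^*\mathcal{F})$ (either as the $G$-equivariant socle when it is proper, or, in the polystable case, as a $G$-fixed copy of a stable Jordan--Hölder factor, which exists because $G$ is abelian of order invertible in $k$ and every nonzero $G$-representation contains a one-dimensional subrepresentation). Running the isotypic calculation with equality everywhere forces $\mathcal{G}^{(0)}\subset\mathcal{F}$ to have slope $\mu(\mathcal{F})$ and rank $\mathrm{rank}(\mathcal{G})$. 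Stability of $\mathcal{F}$ (applied after saturating $\mathcal{G}^{(0)}$) leaves only the options $\mathcal{G}^{(0)}=0$, impossible since $\mathrm{rank}(\mathcal{G})>0$, or $\mathcal{G}^{(0)}=\mathcal{F}$, which forces $\mathrm{rank}(\mathcal{G})=\mathrm{rank}(\pi^*\mathcal{F})$ and hence $\mathcal{G}=\pi^*\mathcal{F}$ by saturation, contradicting properness.

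The main obstacle I anticipate is the construction of the $G$-equivariant proper destabilizer of slope $\mu(\pi^*\mathcal{F})$ in the stability step, particularly in the polystable case when $G$ acts transitively on the stable summands; handling it requires the abelian-representation-theory observation above, together with care near the ramification divisor. A secondary technical point is the rank equality $\mathrm{rank}(\mathcal{G}^{(i)})=\mathrm{rank}(\mathcal{G})$, a generic-point regular-representation computation that must be combined with the push--pull identity to yield the global slope inequality.
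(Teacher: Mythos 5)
Your semistability argument is correct, but it takes a different route from the paper's: the paper notes that the maximal destabilizing subsheaf of $\pi^*\mathcal{F}$ is unique, hence $\sigma$-invariant, and descends it to a destabilizing subsheaf of $\mathcal{F}$ via the Galois descent theorem (Theorem \ref{thm3.1}), getting an immediate contradiction from $\mu(\pi^*\mathcal{G})=n\mu(\mathcal{G})$; you instead push forward and average over isotypic components. Both work, and your rank count $\mathrm{rank}\,\mathcal{G}^{(i)}=\mathrm{rank}\,\mathcal{G}$ is the normal-basis argument at the generic point, which is fine. Your route trades the descent theorem for the push--pull slope formula; the paper's is shorter for this half.

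The genuine gap is in the stability half, at exactly the point you flagged. When $\pi^*\mathcal{F}$ is polystable and $G$ permutes the stable summands transitively, a proper $G$-invariant subsheaf of slope $\mu(\pi^*\mathcal{F})$ need not exist at all, so the strategy of producing one and descending it cannot be completed. Your representation-theoretic fix --- a one-dimensional subrepresentation of $\mathrm{Hom}(S,\pi^*\mathcal{F})$ --- applies only when $g^*S\cong S$ for every $g\in G$, since only then does $\mathrm{Hom}(S,\pi^*\mathcal{F})$ carry a (projective, hence linearizable, as $G$ is cyclic and $k$ algebraically closed) $G$-action. In the multiplicity-free transitive case $\pi^*\mathcal{F}=\bigoplus_{j=0}^{t-1}(\sigma^j)^*S$ with the summands pairwise non-isomorphic and $t\geq 2$, every subsheaf of slope $\mu(\pi^*\mathcal{F})$ is a partial sum $\bigoplus_{j\in J}(\sigma^j)^*S$, and no proper nonempty $J$ is $G$-stable; there is simply no equivariant destabilizer. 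This case must instead be excluded by a direct computation, which is what the paper devotes most of its stability proof to: from $\pi_*\pi^*\mathcal{F}=\bigoplus_j\pi_*\bigl((\sigma^j)^*S\bigr)$ all summands $\pi_*\bigl((\sigma^j)^*S\bigr)$ have slope equal to $\mu(\pi_*\pi^*\mathcal{F})$, while a generic-point analysis of the $G$-action forces $\pi_*S$ to have rank $\mathrm{rank}\,\mathcal{F}$ and to inject, with torsion cokernel, into the single summand $\mathcal{F}\otimes\mathcal{L}^{-(n-1)}$ of strictly smallest slope --- a contradiction. Your proposal needs an argument of this kind for the transitive multiplicity-free case; as written, the stability assertion is not proved.
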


Then we study the stability of tangent bundle on a surface which is a $n$-cyclic covering of another surface and obtain the following results.
\begin{thm}(Theorem \ref{thm3.5})\label{thm1.1}
Let $\pi: X\rightarrow Y$ be a $n$-cyclic covering determined by a line bundle $L \in \Pic (Y)$ and a smooth divisor $B \in \Gamma(Y, L^{\otimes n})$, where $Y$ is a smooth projective surface over an algebraically closed field $k$ with $\Char k\nmid n$ . And let $\mathcal {H}\in \Pic(Y)$ be an ample line bundles satisfying $\mathcal {H} \equiv l B$ for some $0<l\in \mathbb{Q}$.
Assume that the  inequality holds:
$n\deg \Omega_{Y}+(n+1) \deg B\ \geq0$ and the cotangent bundle $\Omega_{Y}$ is semi-stable with respect to $\mathcal {H}$. Then $\Omega_{X}$ is semi-stable with respect to $\pi^*\mathcal {H}\in \Pic(X).$
\end{thm}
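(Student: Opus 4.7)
Argue by contradiction: suppose $\Omega_X$ is not semi-stable with respect to $\pi^*\mathcal{H}$. Since $\Omega_X$ has rank $2$, the maximal destabilizing subsheaf $\mathcal{F}\subset\Omega_X$ is a saturated line bundle, and uniqueness of the Harder--Narasimhan filtration forces $\mathcal{F}$ to be invariant under the Galois group $G=\bZ/n\bZ$ of $\pi$. Analyzing the fibre at a ramification point $r\in R$ in local coordinates $(t,z)$ (with $R=\{t=0\}$ and $G$ acting by $t\mapsto \zeta t$) shows that $\Omega_X|_r=\langle dt\rangle\oplus\langle dz\rangle$ splits into $G$-eigenspaces of weights $1$ and $0$, so the $G$-character of $\mathcal{F}$ is either $0$ or $1$.

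The character-$0$ case is easy: local analysis shows $\mathcal{F}\subseteq\pi^*\Omega_Y$, whence Galois descent gives $\mathcal{F}=\pi^*N$ for a saturated line subbundle $N\subset\Omega_Y$. Theorem \ref{thm3.2} together with the semi-stability of $\Omega_Y$ then yields $\deg\mathcal{F}\leq\mu(\pi^*\Omega_Y)=\mu(\Omega_X)-\tfrac{n-1}{2}(B\cdot\mathcal{H})<\mu(\Omega_X)$, contradicting destabilization. So $\mathcal{F}$ has character $1$, and descent applied to $\mathcal{F}\otimes\mathcal{O}_X(-(n-1)R)$ (which is now of character $0$) identifies $\mathcal{F}\cong \pi^*(N\otimes L^{n-1})$ as line bundles for a saturated $N\subseteq\Omega_Y$, giving $\deg\mathcal{F}=n\deg N+(n-1)(B\cdot\mathcal{H})$; destabilization then rewrites as
\[
\deg N \;>\; \mu(\Omega_Y)-\tfrac{n-1}{2n}(B\cdot\mathcal{H}).
\]

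To sharpen the bound on $\deg N$ I would invoke the canonical identification $\pi^*\Omega_Y(\log B)=\Omega_X(\log R)$, which yields the weight decomposition
\[
\pi_*\Omega_X \;\cong\; \Omega_Y \;\oplus\; \bigoplus_{i=1}^{n-1}\Omega_Y(\log B)\otimes L^{-i},
\]
the $i$-th summand being the weight-$i$ part. Adjunction on $\mathcal{F}\hookrightarrow\Omega_X$ then produces on weight $1$ a nonzero injection $N\otimes L^{n-1}\hookrightarrow \Omega_Y(\log B)\otimes L^{-1}$, and composing with the residue $\Omega_Y(\log B)\otimes L^{-1}\twoheadrightarrow L^{-1}|_B$ splits the analysis into two cases. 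If the composition vanishes, the map factors through $\Omega_Y\otimes L^{-1}$, giving $N\otimes L^n\hookrightarrow \Omega_Y$; semi-stability of $\Omega_Y$ then forces $\deg N\leq \mu(\Omega_Y)-B\cdot\mathcal{H}$, which is already incompatible with the above destabilization bound (since $1>\tfrac{n-1}{2n}$). If the composition is nonzero, it is a nonzero map between line bundles on the smooth curve $B$, forcing a nonzero section of $N^{-1}\otimes L^{-n}|_B$ and hence $(N+nL)\cdot B\leq 0$, i.e.\ $N\cdot B\leq -B^2$; the hypothesis $\mathcal{H}\equiv lB$ then converts this to $\deg N\leq -B\cdot\mathcal{H}$, and combining with the destabilization lower bound yields $n\deg\Omega_Y+(n+1)\deg B<0$, contradicting the hypothesis.

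The main obstacle I expect is the $G$-equivariant bookkeeping in the character-$1$ case: one must verify that $\mathcal{F}\cap\pi^*\Omega_Y$ descends to a saturated line subbundle of $\Omega_Y$, and that the weight-$1$ adjoint lands correctly in $\Omega_Y(\log B)\otimes L^{-1}$ rather than in some unwieldy extension. Once the logarithmic identification $\pi^*\Omega_Y(\log B)=\Omega_X(\log R)$ is in place, the residue dichotomy is exactly tailored so that the ``non-trivial residue'' case produces the inequality $n\deg\Omega_Y+(n+1)\deg B\geq 0$ as the sharp hypothesis required to close the argument.
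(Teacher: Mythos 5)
Your proposal is correct and follows essentially the same strategy as the paper: argue by contradiction with the maximal destabilizing line bundle $\mathcal{M}_1$, which is $\sigma$-invariant by uniqueness of the Harder--Narasimhan filtration; dispose of the case $\mathcal{M}_1\subseteq\pi^*\Omega_Y$ via Theorem \ref{thm3.2}; and in the remaining case descend to a line subsheaf $N\subseteq\Omega_Y$ with $\deg\mathcal{M}_1=n\deg N+(n-1)\deg B$ and bound $\deg N$ by $-\deg B$ using the behaviour of $N$ along $B$, which is exactly where the hypothesis $n\deg\Omega_Y+(n+1)\deg B\geq 0$ enters. The only real difference is the packaging of that last degree bound: the paper restricts the conormal sequence $0\to I_B/I_B^2\to\Omega_Y|_B\to\Omega_B\to 0$ and shows via Lemma \ref{subsheaves} that $\mathcal{N}_1'|_B$ lands in $I_B/I_B^2\cong L^{-n}|_B$, whereas you use the identification $\Omega_X(\log B_1)=\pi^*\Omega_Y(\log B)$ and the residue map on the weight-one summand of $\pi_*\Omega_X$; these yield the identical inequality $N\cdot B\leq -B^2$, and your extra ``residue vanishes'' case (which the paper's saturation argument rules out implicitly) is handled harmlessly by semi-stability of $\Omega_Y$. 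One point to tighten: descending $\mathcal{M}_1\otimes\mathcal{O}_X(-(n-1)B_1)$ as an abstract $G$-line bundle requires checking that the stabilizer actions along the ramification are trivialized (Kempf-type descent), whereas the paper avoids this by applying the subsheaf-descent Theorem \ref{thm3.1} directly to the saturated invariant subsheaf $\mathcal{N}_1=\mathcal{M}_1\cap\pi^*\Omega_Y$ of the pulled-back bundle $\pi^*\Omega_Y$; your own local normal form for the weight-one generator gives this trivialization, so the gap is only expository.
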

\begin{thm}(Theorem \ref{thm3.6})
With the same conditions in Theorem \ref{thm1.1}, if the strict inequality holds: $n\deg \Omega_{Y}+(n+1) \deg B\ >0$ and the cotangent bundle $\Omega_{Y}$ is semi-stable with respect to $\mathcal {H}$, then $\Omega_{X}$ is stable with respect to $\pi^*\mathcal {H}\in \Pic(X)$.
\end{thm}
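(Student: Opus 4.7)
The plan is to argue by contradiction, refining the proof of Theorem~\ref{thm3.5} so that the strict inequality $n\deg\Omega_Y+(n+1)\deg B>0$ excludes even the equal-slope case. By Theorem~\ref{thm3.5}, $\Omega_X$ is $\mu_{\pi^*\mathcal{H}}$-semistable; so if stability fails, there is a saturated rank-$1$ subsheaf $\mathcal{F}\subset\Omega_X$ with $\mu(\mathcal{F})=\mu(\Omega_X)$, and the goal is to exclude such an $\mathcal{F}$.

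Set $\mathcal{F}':=\mathcal{F}\cap\pi^*\Omega_Y$ inside the ramification sequence
\begin{equation*}
0\to\pi^*\Omega_Y\to\Omega_X\to\mathcal{Q}\to 0,
\end{equation*}
where $\mathcal{Q}$ is a torsion sheaf on the ramification divisor $\tilde B=\pi^{-1}(B)_{\mathrm{red}}$ of generic length $n-1$. Then $\mathcal{F}/\mathcal{F}'\hookrightarrow\mathcal{Q}$ is a purely one-dimensional sheaf of generic length $m\in\{0,1,\ldots,n-1\}$ along $\tilde B$, and $\deg\mathcal{F}=\deg\mathcal{F}'+m\deg B$. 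The case $m=0$ means $\mathcal{F}\subset\pi^*\Omega_Y$; semistability of $\pi^*\Omega_Y$ (Theorem~\ref{thm3.2}) then gives $\mu(\mathcal{F})\leq\mu(\pi^*\Omega_Y)=n\deg\Omega_Y/2$, which is strictly smaller than $\mu(\Omega_X)$ since $\deg B>0$, contradicting equality.

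For $m\geq 1$ the crux is a restriction of $\mathcal{F}'$ to $\tilde B$. In the standard local coordinates $(t,y_2)$ near a point of $\tilde B$ where the covering has the form $t^n=f$, a saturated generator $\omega$ of $\mathcal{F}$ is of the form $a\,dt+b\,dy_2$ with $v_t(a)=n-1-m$; saturation forces $v_t(b)=0$ whenever $m<n-1$, and for $m=n-1$ the $dy_2$-component of the generator $t^m\omega$ of $\mathcal{F}'$, namely $t^{n-1}b$, automatically vanishes modulo $t$. In either case the natural map $\mathcal{F}'|_{\tilde B}\to\pi^*\Omega_Y|_{\tilde B}=\Omega_Y|_B$ factors through the conormal sub-line-bundle $\mathcal{O}_B(-B)\hookrightarrow\Omega_Y|_B$, producing the intersection inequality $\mathcal{F}'\cdot\tilde B\leq -B^2$.

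A short computation using $\mathcal{F}'=\mathcal{F}(-m\tilde B)$ together with the identities $\tilde B^2=B^2/n$ and $\pi^*\mathcal{H}\equiv nl\tilde B$ (coming from $\pi^*B=n\tilde B$ and $\mathcal{H}\equiv lB$) then translates the above into $\deg_{\pi^*\mathcal{H}}\mathcal{F}\leq-(n-m)\deg B$. Substituting $\mu(\mathcal{F})=\mu(\Omega_X)=(n\deg\Omega_Y+(n-1)\deg B)/2$ yields
\begin{equation*}
n\deg\Omega_Y+(3n-2m-1)\deg B\leq 0;
\end{equation*}
since $3n-2m-1\geq n+1$ for every $m\leq n-1$ and $\deg B>0$, this forces $n\deg\Omega_Y+(n+1)\deg B\leq 0$, contradicting the strict hypothesis. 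The main obstacle is the local saturation analysis at the ramification, ensuring the claimed vanishing of the $dy_2$-component of the generator of $\mathcal{F}'$ for every $m\geq 1$ without relying on Galois invariance of $\mathcal{F}$; once that is established, the remaining intersection-theoretic steps go through routinely using $\mathcal{H}\equiv lB$.
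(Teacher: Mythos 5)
Your argument is correct in its essentials but follows a genuinely different route from the paper. The paper first shows, using the strict inequality together with the estimate from Theorem~\ref{thm3.5}, that an equal-slope saturated sub-line bundle $\mathcal{M}_1$ cannot be $\sigma$-invariant; uniqueness of the Jordan--H\"{o}lder factors then forces $\Omega_X=\mathcal{M}_1\oplus\mathcal{M}_1^{\sigma}$, and the contradiction is extracted by pushing this decomposition down to $Y$, using the eigenspace decomposition of $\pi_*\pi^*\Omega_Y$ to reduce to $n=2$ and a Grothendieck--Riemann--Roch slope computation for $\pi_*$. You instead bound the slope of an \emph{arbitrary} saturated rank-one subsheaf $\mathcal{F}\subset\Omega_X$ directly, stratifying by the generic length $m$ of $\mathcal{F}/(\mathcal{F}\cap\pi^*\Omega_Y)$ along $\tilde B$, and this works: the step you flag as ``the main obstacle'' is in fact immediate, since at the generic point of $\tilde B$ the generator of $\mathcal{F}'=\mathcal{F}\cap\pi^*\Omega_Y$ is $t^m\omega=\tfrac{a_0}{n}\pi^*\diff x_1+t^m b\,\diff y_2$ with $a_0$ a unit (here $a=t^{\,n-1-m}a_0$), and $t^m b\equiv 0\pmod{t}$ for every $m\geq 1$ regardless of $v_t(b)$, so the factorization through the conormal bundle requires no Galois invariance and no case split on $m$. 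Your approach buys several things: it dispenses with the descent theorem, the Jordan--H\"{o}lder splitting and the entire $\pi_*$ computation; it extends Lemma~\ref{subsheaves} (which the paper proves only for $\sigma$-invariant subsheaves, where necessarily $m=n-1$) to all $m\geq 1$ with the sharper bound $\mu(\mathcal{F})\leq-(n-m)\deg B$; and since this is always $\leq-\deg B$, it simultaneously reproves Theorem~\ref{thm3.5}. Two small points should be made explicit when writing this up: $\deg B=lB^2>0$ because $\mathcal{H}\equiv lB$ is ample, which is what makes the $m=0$ case (where you invoke Theorem~\ref{thm3.2}) a strict inequality; and if $B$ is disconnected the integer $m$ and the conormal estimate must be handled component by component of $\tilde B$ --- a caveat that equally affects the paper's own treatment.
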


Using this result we find a class of surfaces whose tangent bundle are stable:
\begin{cor}(Corollary \ref{cor3.8})
Let $k$ be an algebraically closed field with  $\Char k\nmid n$ and $\pi: X\rightarrow \mathbb{P}^2_k$ be a $n$-cyclic covering branched along a smooth curve $B$ in $\mathbb{P}^2_k$. Then $\Omega_X$ is semi-stable with respect to $\pi^*\mathcal {O}_{\mathbb{P}^N}(1)$, and it is stable with respect to $\pi^*\mathcal {O}_{\mathbb{P}^N}(1)$ when $n=2$ and $\deg B\geq 4$ or $n>2$.

\end{cor}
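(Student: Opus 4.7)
The plan is to apply Theorem~\ref{thm3.5} and Theorem~\ref{thm3.6} directly, taking $Y=\mathbb{P}^2_k$ and $\mathcal{H}=\mathcal{O}_{\mathbb{P}^2}(1)$. To do so one must verify three things: (a) $\Omega_{\mathbb{P}^2}$ is semi-stable with respect to $\mathcal{O}(1)$; (b) $\mathcal{H}\equiv \ell B$ in $\Pic(Y)\otimes\mathbb{Q}$ for some $\ell>0$; and (c) the numerical condition $n\deg\Omega_Y+(n+1)\deg B\geq 0$, with strict inequality in the stability case. Once all three hold, the corollary reads off from the two theorems.

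Conditions (a) and (b) are essentially automatic. The cotangent bundle $\Omega_{\mathbb{P}^2}$ is classically slope-stable with respect to $\mathcal{O}(1)$: any line subsheaf $\mathcal{O}(a)\hookrightarrow \Omega_{\mathbb{P}^2}$ produces a nonzero global section of $\Omega_{\mathbb{P}^2}(-a)$, and the Euler sequence gives $H^{0}(\mathbb{P}^2,\Omega_{\mathbb{P}^2}(k))=0$ for $k\leq 1$, forcing $a\leq -2<\mu_{\mathcal{H}}(\Omega_{\mathbb{P}^2})=-3/2$. For (b), $\Pic(\mathbb{P}^2)=\mathbb{Z}\cdot\mathcal{O}(1)$, so writing $L=\mathcal{O}(m)$ the condition $B\in|L^{\otimes n}|$ forces $\deg B=d=nm$ with $m\geq 1$; hence $\mathcal{H}=\tfrac{1}{d}B$ in the rational Picard group.

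The only substantive check is (c). Since $\deg_{\mathcal{H}}\Omega_{\mathbb{P}^2}=-3$ and $\deg_{\mathcal{H}}B=d=nm$, one computes
\[
n\deg\Omega_Y+(n+1)\deg B \;=\; -3n+(n+1)nm \;=\; n\bigl((n+1)m-3\bigr).
\]
With $n\geq 2$ and $m\geq 1$ the bracket is at least $n-2\geq 0$, so the semi-stability hypothesis of Theorem~\ref{thm3.5} is met unconditionally, giving semi-stability of $\Omega_X$. For strict positivity there are two cases: if $n>2$, then $(n+1)m-3\geq n-2>0$ regardless of $m$; if $n=2$, then $3m-3>0$ is equivalent to $m\geq 2$, i.e.\ $\deg B\geq 4$. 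Theorem~\ref{thm3.6} then yields stability in exactly the range stated in the corollary.

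There is no real obstacle beyond bookkeeping. The only point that deserves attention is confirming that the symbol ``$\deg$'' in Theorems~\ref{thm3.5} and \ref{thm3.6} denotes intersection with $\mathcal{H}$ on $Y$ (rather than with $\pi^{*}\mathcal{H}$ on $X$), so that the computation above is the one the theorems ask for; once this is verified the corollary is an immediate numerical consequence.
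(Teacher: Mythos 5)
Your proposal is correct and follows exactly the route the paper intends: the paper gives no written proof of Corollary \ref{cor3.8} beyond invoking Theorem \ref{thm3.7} for the stability of $\Omega_{\mathbb{P}^2}$, and your verification of the hypotheses of Theorems \ref{thm3.5} and \ref{thm3.6} --- in particular the computation $n\deg\Omega_Y+(n+1)\deg B=n\bigl((n+1)m-3\bigr)$ with $\deg B=nm$ --- is precisely the bookkeeping that justifies the stated case division.
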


In addition, we already know that the tangent bundle of a K3 surface of characteristic zero is stable (see Chapter 9 of \cite{HDLM}). By the above corollary we obtain a partial result about the stability of the tangent bundle of a K3 surface in positive characteristic: when   $\Char k\neq2$, the tangent bundle of a K3 surface,  which can be realized as a double covering of $\mathbb{P}^2$ branched along a smooth curve of degree $6$ or a $4$-cyclic covering of $\mathbb{P}^2$ branched along a smooth curve of degree $1$, is stable.

\section{Preliminaries}
\subsection{cyclic coverings}
Let $k$ denote an algebraically closed field with $\Char k\nmid n$. First we recall the definition of $n$-cyclic coverings. Let $Y$ be a smooth variety over $k$ and let $B$ be an effective divisor on $Y$. Suppose that $\mathcal {O}_Y(B)$ is divisible by $n$ in $\Pic (Y)$,
i.e. there is a line bundle $\sL\in \Pic (Y)$ such that $\sL^{\otimes n}=\mathcal {O}_Y(B).$
Note that the divisor $B$ gives a morphism $\sL^{\otimes -n}\rightarrow \mathcal {O}_Y$, and this
morphism gives $\mathscr{E}=\mathcal {O}_Y\oplus \sL^{-1}\oplus\cdots\oplus\sL^{-n+1}$ a ring structure.
Let $X=\Spec(\mathscr{E})$, then with the canonical projection we get a $n$-cyclic covering over $Y$: $$\pi: X\longrightarrow Y.$$
 Let $B_1$ denote the reduced divisor of the inverse image $\pi^{-1}(B)$.
 Then we have the following lemma:
\begin{lem}\label{lem2.1}(\cite[1.17]{BPV})
Keep the above notation. Then we have:
\begin{enumerate}
\item $\pi^{\ast}\sL=\mathcal {O}_X(B_1)$;
\item \label{it3}$K_X=\pi^{\ast}(K_Y\otimes \sL^{\otimes(n-1)})$;
\item \label{it4}$\pi_{\ast}\mathcal {O}_X= \mathcal {O}_Y\oplus \sL^{-1}\oplus \cdots\oplus \sL^{\otimes(1-n)}.$
\end{enumerate}
\end{lem}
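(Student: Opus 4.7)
The plan is to take the three statements in order, beginning with (3) since it is essentially tautological and then bootstrapping (1) and (2) from it together with the construction of $X$ as a relative Spec.

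For part (3), the morphism $\pi$ is affine by construction, since $X = \Spec(\mathscr{E})$ as a $Y$-scheme. By the definition of relative Spec, $\pi_*\mathcal{O}_X = \mathscr{E} = \mathcal{O}_Y \oplus \sL^{-1} \oplus \cdots \oplus \sL^{\otimes(1-n)}$, which is exactly the claim.

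For part (1), I would produce a canonical section $t \in H^0(X, \pi^*\sL)$ whose zero locus is $B_1$. The inclusion of the degree-one summand $\sL^{-1} \hookrightarrow \pi_*\mathcal{O}_X$ is, by adjunction, a morphism $\pi^*\sL^{-1} \to \mathcal{O}_X$; dualizing gives the desired tautological section $t$. To identify its divisor, I would work locally on $Y$: choose a trivialization of $\sL$ so that the section $s \in H^0(Y,\sL^{\otimes n})$ cutting out $B$ becomes a function, whereupon $X$ is cut out by $t^n = s$ in the total space of $\sL$. In these coordinates $t$ visibly generates the ideal of $\pi^{-1}(B)_{\rm red} = B_1$, and the equation $t^n = s$ shows $\pi^{\ast}(B) = nB_1$ as divisors; together this yields $\pi^*\sL = \mathcal{O}_X(B_1)$. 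The mild point to check here is that $B_1$ is reduced (indeed smooth) and that the ramification multiplicity is exactly $n$; both follow from the smoothness of $B$ and the hypothesis $\Char k \nmid n$, which ensures $t^n - s$ defines a smooth scheme over $Y \setminus B$ and has only tame ramification along $B$.

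For part (2), I would invoke the ramification formula for a finite separable morphism between smooth varieties: $K_X = \pi^*K_Y \otimes \mathcal{O}_X(R)$, where $R$ is the ramification divisor. Using the local model $t^n = s$ from step two, the different has order $n-1$ along $B_1$ and $\pi$ is \'etale elsewhere (again using $\Char k \nmid n$), so $R = (n-1)B_1$. Combining with part (1) gives
\[
K_X \;=\; \pi^*K_Y \otimes \mathcal{O}_X\bigl((n-1)B_1\bigr) \;=\; \pi^*K_Y \otimes (\pi^*\sL)^{\otimes(n-1)} \;=\; \pi^{\ast}\bigl(K_Y \otimes \sL^{\otimes(n-1)}\bigr).
\]

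The only nontrivial input across the three parts is the local computation at the branch locus in step two; once the local form $t^n = s$ is in hand, both the identification $(\pi^*\sL)(-B_1) \cong \mathcal{O}_X$ and the ramification multiplicity $n-1$ fall out simultaneously, so the real content of the lemma is concentrated in that single local calculation.
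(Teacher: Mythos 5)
The paper offers no proof of this lemma, simply citing \cite[1.17]{BPV}, so there is nothing internal to compare against; your argument is correct and is essentially the standard one found in that reference. Part (3) is immediate from $X=\Spec(\mathscr{E})$, the tautological section $t$ of $\pi^{*}\sL$ obtained by adjunction from $\sL^{-1}\hookrightarrow\pi_{*}\mathcal{O}_{X}$ does vanish to order exactly one along the reduced, smooth divisor $B_1$ (locally $\mathcal{O}_X/(t)\cong\mathcal{O}_B$), and the tame ramification formula with $R=(n-1)B_1$ — justified by $\Char k\nmid n$ via $\diff(\pi^{*}x_1)=nt^{n-1}\diff t$ — yields $K_X=\pi^{*}K_Y\otimes\mathcal{O}_X((n-1)B_1)=\pi^{*}(K_Y\otimes\sL^{\otimes(n-1)})$, so there are no gaps.
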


\subsection{Stability}
We collect here some well known facts about stability of coherent sheaves. Let $X$ be a smooth projective variety of dimension $n$ over an algebraically closed field $k$ and $\mathcal {H}$ an ample line bundle on $X$. Let $E$ be a torsion  free sheaf on $X$. The rank of $E$, denoted by $\rank E$, is the rank of $E$ at the generic point of $X$. The first Chern class of $E$ is denoted by $c_1(E)$, then the degree of $E$, denoted by $\deg E$, is the intersection number $c_1(E)\cdot\mathcal {H}^{n-1}$. We define:
$$\mu(E):=\frac{\deg E}{\rank E}$$
which depends on the choice of the ample line bundle $\mathcal {H}$ on $X$.

\begin{defn}
$E$ is stable (resp. semi-stable) if for any sub-sheaf $F$ with $0<\rank F<\rank E$, we have:
$$\mu(F)<\mu(E) \ \ (\text{resp.}\ \mu(F)\leq\mu(E)).$$
\end{defn}

Actually, in the above definition we only need to consider the saturated sub-sheaves of $E$.
If $E$ is a torsion free sheaf on $X$, then there is a unique filtration(the Harder-Narasimhan filtration)$$0=E_0\subsetneq E_1\subsetneq...\subsetneq E_l=E$$
such that the factors $gr_i^{HN}:=E_i/E_{i-1}$ for $i=1,...,l$ are semi-stable sheaves and $$\mu_{max}(E):=\mu(gr_1^{HN})>...>\mu(gr_l^{HN})=:\mu_{min}(E).$$
The instability of $E$ is defined as ${\rm I}(E):=\mu_{max}(E)-\mu_{min}(E)$, and $E$ is semi-stable if and only if ${\rm I}(E)=0$.

If $E$ is a semi-stable torsion free sheaf on $X$, then there is a filtration (the Jordan-H\"{o}lder  filtration)$$0=E_0\subsetneq E_1\subsetneq...\subsetneq E_l=E$$
such that the factors $gr_i=E_i/E_{i-1}$ for $i=1,...,l$ are stable sheaves and $$\mu(gr_1)=...=\mu(gr_l)=\mu(E).$$
Note that the Jordan-H\"{o}lder  filtration is not unique but the factors $gr_i:=E_i/E_{i-1}$ for $i=1,...,l$ are unique determined by $E$.

\section{The main results and Proof}
\subsection{stability of the pull back of stable sheaves}
For a Galois covering $\pi: X\rightarrow Y$ with Galois group $G$ and coherent sheaf $\sE$ on $Y$, then there is an isomorphism $\sigma^*\pi^*\sE\simeq \pi^*\sE$.  For any sub-sheaf $\sF\subset \pi^*\sE$ we will denote the image of sub-sheaf $\sigma^*\sF\subset \sigma^*\pi^*\sE$ under this isomorphism by $\sF^{\sigma}$ for $\sigma\in G$. \\
Next we relate the following fact in Galois decent theory:
\begin{thm}\label{thm3.1}(\cite[Theorem 28, Chapte 9]{FR})
Let $p:X\rightarrow Y$ be a finite flat Galois morphism with Galois group $G$, and suppose that $X$ and $Y$ are both integral. Let $\mathcal {F}$ be a locally free sheaf on $Y$ and suppose that $\mathcal {G}^\prime$ is a coherent sub-sheaf of $p^*\mathcal {F}$ such that:
\begin{itemize}
  \item For all $\sigma\in G$, $(\mathcal {G}^\prime)^\sigma=\mathcal {G}^\prime$ as a sub-sheaf of $p^*\mathcal {F}$;
  \item $p^*\mathcal {F}/\mathcal {G}^\prime$ is torsion free.
\end{itemize}
Then there exists a coherent sub-sheaf $\mathcal {G}$ of $\mathcal {F}$ such that $\mathcal {G}^\prime=p^*\mathcal {G}$ as a sub-sheaf of $p^*\mathcal {F}$
\end{thm}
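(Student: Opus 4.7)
The plan is to apply Galois descent. Since $p: X\to Y$ is finite flat $G$-Galois, the functor $p^*$ sets up an equivalence between quasi-coherent sheaves on $Y$ and $G$-equivariant quasi-coherent sheaves on $X$, with quasi-inverse $\mathcal{N}\mapsto (p_*\mathcal{N})^G$. The first hypothesis furnishes a $G$-equivariant structure on $\mathcal{G}'$, so I would descend $\mathcal{G}'$ under this equivalence to produce the required $\mathcal{G}\subset\mathcal{F}$.

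First I would reduce to the affine situation. The statement is local on $Y$, so pick an affine open $U = \Spec A \subset Y$ with $p^{-1}(U) = \Spec B$, where $B/A$ is a $G$-Galois finite flat extension. Let $M$ denote the finitely generated projective $A$-module with $\widetilde{M} = \mathcal{F}|_U$, and let $N \subset M\otimes_A B$ be the $B$-submodule corresponding to $\mathcal{G}'|_{p^{-1}(U)}$. The first hypothesis then translates into $N$ being stable under the natural semi-linear $G$-action on $M\otimes_A B$ (via the second factor); the second into $(M\otimes_A B)/N$ being torsion-free over $B$.

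Next I would take $G$-invariants. Using the standard identity $(M\otimes_A B)^G = M\otimes_A B^G = M$ (valid since $B/A$ is $G$-Galois and $M$ is $A$-flat), the $A$-submodule $N_0 := N^G$ sits inside $M$, and I set $\mathcal{G}|_U := \widetilde{N_0}$. The content of descent is then to verify that the canonical multiplication map $N_0\otimes_A B \to N$ is an isomorphism of $B$-modules; granted this, gluing over an affine cover of $Y$ yields a coherent subsheaf $\mathcal{G}\subset \mathcal{F}$ with $p^*\mathcal{G} = \mathcal{G}'$. Finite generation of $N_0$ over $A$ follows by faithfully flat descent from the isomorphism $N_0\otimes_A B \cong N$.

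The main obstacle is this descent isomorphism $N_0\otimes_A B \cong N$. The cleanest route is via the averaging operator $e = \frac{1}{|G|}\sum_{\sigma\in G}\sigma$, which is well-defined because $\Char k\nmid |G|$; this operator realizes $N_0$ as a direct $A$-summand of $N$, and then a direct comparison of generators shows that the multiplication map $N_0\otimes_A B \to N$ is surjective, while flatness of $B$ over $A$ gives injectivity. The torsion-free hypothesis on $p^*\mathcal{F}/\mathcal{G}'$ is not strictly needed for the abstract descent; its role is to guarantee that $\mathcal{F}/\mathcal{G}$ is likewise torsion-free, so that $\mathcal{G}$ is the unique saturated subsheaf of $\mathcal{F}$ with $p^*\mathcal{G} = \mathcal{G}'$, which is the form required for the slope comparisons in the later theorems.
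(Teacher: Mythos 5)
The paper offers no proof of this statement---it is quoted from Friedman's book---so your proposal can only be measured against the standard descent argument it invokes, and measured that way it has a genuine gap at its central step. For a \emph{ramified} finite flat Galois morphism (and the coverings used in this paper, cyclic coverings branched along $B$, are ramified), the functor $p^*$ is \emph{not} an equivalence from quasi-coherent sheaves on $Y$ to $G$-equivariant quasi-coherent sheaves on $X$, and the counit $N^G\otimes_A B\to N$ need not be surjective. Concretely, take $A=k[x]\subset B=k[t]$ with $t^2=x$ and $\sigma(t)=-t$, $M=A$, and $N=tB\subset B=M\otimes_AB$: this $N$ is $G$-stable, yet $N^G=xA$ and $N^G\otimes_AB=t^2B\subsetneq tB=N$. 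So the sentence ``a direct comparison of generators shows that the multiplication map $N_0\otimes_AB\to N$ is surjective'' is exactly where the argument breaks, and your closing remark that torsion-freeness of $p^*\mathcal{F}/\mathcal{G}'$ ``is not strictly needed for the abstract descent'' is wrong: without that hypothesis the theorem itself is false (e.g.\ $\mathcal{O}_X(-B_1)\subset\mathcal{O}_X=p^*\mathcal{O}_Y$ is $G$-stable but is not the pullback of any subsheaf of $\mathcal{O}_Y$, since pulling back the ideal sheaf of a divisor on $Y$ picks up the full ramification multiplicity). A secondary issue: your averaging operator uses $\Char k\nmid |G|$, which is not among the hypotheses of the statement.

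The repair is to descend where descent actually holds and then use torsion-freeness to propagate. At the generic point, $K(X)/K(Y)$ is a genuine Galois field extension, so classical Galois descent of subspaces (valid in any characteristic, no averaging required) produces a $K(Y)$-subspace $W\subset\mathcal{F}_\eta\otimes_{\mathcal{O}_{Y,\eta}}K(Y)$ with $\mathcal{G}'$ generically equal to $W\otimes_{K(Y)}K(X)$. One then takes $\mathcal{G}\subset\mathcal{F}$ to be the subsheaf determined by $W$ and compares $p^*\mathcal{G}$ with $\mathcal{G}'$: the two agree over the dense open \'etale locus of $p$ by honest \'etale Galois descent, and the hypothesis that $p^*\mathcal{F}/\mathcal{G}'$ is torsion free is precisely what forces a subsheaf agreeing with $\mathcal{G}'$ on a dense open set to be contained in $\mathcal{G}'$, with the reverse inclusion requiring the correct choice of $\mathcal{G}$. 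Your affine-local reduction and the identification $(M\otimes_AB)^G=M$ are fine as far as they go, but without this detour through the generic point or the \'etale locus the descent isomorphism you need is simply not available.
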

Immediately, we have the following observation:
\begin{thm}\label{thm3.2}
Let Y be a smooth projective variety over $k$ with $\Char k\nmid n$ and let $\pi: X\rightarrow Y$ be a $n$-cyclic covering  branched along a smooth effective divisor $B\in \Gamma(Y,\mathcal {L}^{\otimes n})$ for some line bundle $\mathcal {L}\in \Pic(Y)$. If $\mathcal {F}$ is locally free sheaf on $Y$ and (semi-)stable with respect to an ample line bundle $\mathcal {H}$ on $Y$ then $\pi^*\mathcal {F}$ is (semi-)stable with respect to the ample line bundle $\pi^*\mathcal {H}$.
\end{thm}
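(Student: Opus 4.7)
The plan is to argue by contradiction via Galois descent (Theorem \ref{thm3.1}). Under the hypothesis $\Char k \nmid n$, the covering $\pi : X \to Y$ is a finite flat Galois morphism with cyclic group $G$ of order $n$. It then suffices to produce, inside any destabilizing subsheaf of $\pi^{\ast}\mathcal{F}$, a saturated $G$-invariant subsheaf whose descent $\mathcal{G} \subset \mathcal{F}$ violates the assumed (semi-)stability of $\mathcal{F}$ after a slope comparison.

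\textbf{Slope bookkeeping.} Since $\pi$ is finite flat of degree $n$, the projection formula together with $c_1(\pi^{\ast}\mathcal{K}) = \pi^{\ast} c_1(\mathcal{K})$ gives
\[ \deg_{\pi^{\ast}\mathcal{H}}(\pi^{\ast}\mathcal{K}) \;=\; n\,\deg_{\mathcal{H}}(\mathcal{K}), \qquad \rank(\pi^{\ast}\mathcal{K}) \;=\; \rank(\mathcal{K}), \]
and hence $\mu_{\pi^{\ast}\mathcal{H}}(\pi^{\ast}\mathcal{K}) = n\,\mu_{\mathcal{H}}(\mathcal{K})$. A slope inequality upstairs therefore transfers verbatim downstairs whenever the two subsheaves are pullbacks from $Y$.

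\textbf{Semi-stable case.} If $\pi^{\ast}\mathcal{F}$ is not semi-stable, let $\mathcal{G}'$ be the maximal destabilizing subsheaf, i.e.\ the first step of its Harder--Narasimhan filtration. Uniqueness of the HN filtration makes $\mathcal{G}'$ automatically $G$-invariant, and it is saturated by construction. Theorem \ref{thm3.1} then yields $\mathcal{G} \subset \mathcal{F}$ with $\pi^{\ast}\mathcal{G} = \mathcal{G}'$; the slope identity above turns $\mu(\mathcal{G}') > \mu(\pi^{\ast}\mathcal{F})$ into $\mu_{\mathcal{H}}(\mathcal{G}) > \mu_{\mathcal{H}}(\mathcal{F})$, contradicting semi-stability of $\mathcal{F}$.

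\textbf{Stable case and main obstacle.} If $\mathcal{F}$ is stable, the argument above already proves $\pi^{\ast}\mathcal{F}$ semi-stable; suppose it fails to be stable. Consider the socle of $\pi^{\ast}\mathcal{F}$, i.e.\ the unique maximal polystable subsheaf of slope $\mu(\pi^{\ast}\mathcal{F})$: being canonical it is $G$-invariant and saturated. If it is proper, descend it directly. If it equals $\pi^{\ast}\mathcal{F}$, decompose $\pi^{\ast}\mathcal{F} \cong \bigoplus_i V_i^{\oplus a_i}$ with $V_i$ pairwise non-isomorphic stable factors; $G$ then permutes the isotypic components $V_i^{\oplus a_i}$. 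Any proper union of $G$-orbits of isotypic components, or---when $G$ acts transitively on isotypic components---any $G$-stable proper subspace of the multiplicity space of a single orbit (which exists by complete reducibility, since $\Char k \nmid n$ makes $G$ linearly reductive via Maschke's theorem), produces a proper $G$-invariant saturated subsheaf. Descending yields $\mathcal{G} \subsetneq \mathcal{F}$ with $\mu_{\mathcal{H}}(\mathcal{G}) = \mu_{\mathcal{H}}(\mathcal{F})$ and $0 < \rank(\mathcal{G}) < \rank(\mathcal{F})$, contradicting stability of $\mathcal{F}$. The polystable/Maschke analysis certifying properness of the descended subsheaf is the only genuinely subtle step; everything else is a routine application of Theorem \ref{thm3.1} and the degree formula.
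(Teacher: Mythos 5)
Your semi-stable argument coincides with the paper's: the maximal destabilizing subsheaf is canonical, hence $\sigma$-invariant and saturated, so it descends by Theorem \ref{thm3.1} and the slope identity $\mu(\pi^*\mathcal{G})=\deg(\pi)\,\mu(\mathcal{G})$ gives the contradiction. The stable case, however, has a genuine gap. Your strategy there is purely descent-theoretic: find a proper nonzero $G$-invariant saturated subsheaf of slope $\mu(\pi^*\mathcal{F})$ and descend it. But in the critical sub-case such a subsheaf need not exist. Suppose the socle is all of $\pi^*\mathcal{F}$ and $\pi^*\mathcal{F}\cong V_1\oplus V_1^{\sigma}\oplus\cdots\oplus V_1^{\sigma^{m-1}}$ with the conjugates pairwise non-isomorphic stable sheaves, each occurring with multiplicity one ($m\ge 2$, one $G$-orbit of isotypic components, each multiplicity-free). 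Then every saturated subsheaf of slope $\mu$ is a partial sum of these distinguished summands, $G$ permutes them transitively, and the only $G$-invariant choices are $0$ and the whole sheaf. Your appeal to Maschke's theorem does not help here: complete reducibility of the stabilizer action on a one-dimensional multiplicity space does not produce a proper nonzero subrepresentation. This is not a fringe case --- it is exactly the configuration realized by $\pi^*(\pi_*M)$ for a line bundle $M$ with $M\not\cong M^{\sigma}$ on an \emph{\'etale} cyclic cover, which shows the theorem is false when $B=0$. Since your argument for stability never uses the branch divisor, it cannot be completed by descent alone.

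The paper closes precisely this case by a different mechanism: it first shows (via the Jordan--H\"older filtration and the fact that a nonzero map of stable sheaves of equal slope is an isomorphism) that a non-stable $\pi^*\mathcal{F}$ must split as a direct sum of $l$ conjugate stable factors permuted transitively by $G$, and then pushes forward. Comparing the two decompositions
$$\pi_*\pi^*\mathcal{F}=\mathcal{F}\oplus\mathcal{F}(\mathcal{L}^{-1})\oplus\cdots\oplus\mathcal{F}(\mathcal{L}^{-n+1})=\pi_*gr_1\oplus\cdots\oplus\pi_*gr_l$$
and using the $\xi^{i}$-eigenspace description of the summands $\mathcal{F}(\mathcal{L}^{-i})$, one obtains an injection $\pi_*gr_1\hookrightarrow\mathcal{F}(\mathcal{L}^{-n+1})$ with torsion quotient, whence $\mu(\pi_*gr_1)<\mu(\pi_*\pi^*\mathcal{F})$ because $\deg\mathcal{L}>0$; this contradicts $\mu(\pi_*gr_1)=\mu(\pi_*\pi^*\mathcal{F})$. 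Note that the strict inequality is exactly where the nonemptiness of $B$ enters. To repair your proof you would need to replace the ``transitive, multiplicity-free'' branch of your case analysis with an argument of this kind (or some other argument that genuinely uses $\deg\mathcal{L}>0$).
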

\begin{proof}
Let $G=\{1,\sigma,\cdots,\sigma^{n-1}\}$ be the Galois group acting on $X$.
We first prove $\pi^*\mathcal {F}$ is semi-stable. Suppose $\pi^*\mathcal {F}$ is not semi-stable. then there is a unique maximal destabilizing sub-sheaf $$\mathcal {M}\subset\pi^*\mathcal {F}\ \text{ with }\ \mu(\mathcal {M})>\mu(\pi^*\mathcal {F}).$$ Note that $\mathcal {M}$ is a saturated sub-sheaf and $\mathcal {M}^\sigma=\mathcal {M}$. By Theorem \ref{thm3.1}, $\mathcal {M}=\pi^*\mathcal {G}$ for some sub-sheaf $\mathcal {G}\subset\mathcal {F}$. Since $\mu(\pi^*\mathcal {G})=\text{deg}(\pi)\ \mu(\mathcal {G})$ and $\mu(\pi^*\mathcal {F})=\text{deg}(\pi)\ \mu(\mathcal {F})$ we get $\mu(\mathcal {G})>\mu(\mathcal {F})$ which is a contradiction.

 Second,  we suppose $\pi^*\mathcal {F}$ is not stable. Then there is a filtration (the Jordan-H\"{o}lder  filtration)$$0=\mathcal {F}_0\subsetneq \mathcal {F}_1\subsetneq...\subsetneq \mathcal {F}_l=\pi^*\mathcal {F}$$
such that the factors $gr_i=\mathcal {F}_i/\mathcal {F}_{i-1}$ for $i=1,...,l\ (l\geq2)$ are stable sheaves and $$\mu(gr_1)=...=\mu(gr_l)=\mu(\pi^*\mathcal {F}).$$
There is no saturated sub-sheaf $\mathcal {E}\subset\pi^*\mathcal {F}$ with $\mu(\mathcal {E})\geq\mu(\pi^*\mathcal {F})$ and $\mathcal {E}^\sigma=\mathcal {E}$ (if not then $\mathcal {E}=\pi^*\mathcal {E}^\prime$ for some sub-sheaf $\mathcal {E}^\prime\subset\mathcal {F}$ and by the argument as before we will get a contradiction), so $\mathcal {F}_i^\sigma\neq\mathcal {F}_i$ for $i<l$. Using the fact that the only nontrivial map between two stable sheaves with the same slope is an isomorphism, we have $l\mid n$, $$\pi^*\mathcal {F}=gr_1\oplus \cdots\oplus gr_l\ \text{ and }\ gr_1\simeq\cdots \simeq gr_l.$$
More over $G$ acts on those factors transversally.
In fact, after the action of $\sigma$ we get another Jordan-H\"{o}lder  filtration:
$$0=\mathcal {F}_0^\sigma\subsetneq \mathcal {F}_1^\sigma\subsetneq...\subsetneq \mathcal {F}_l^\sigma=\pi^*\mathcal {F}$$
such that the factors $gr_i^\prime=\mathcal {F}_i^\sigma/\mathcal {F}_{i-1}^\sigma$ for $i=1,...,l$ are stable sheaves and $$\mu(gr_1^\prime)=...=\mu(gr_l^\prime)=\mu(\pi^*\mathcal {F}).$$
Let $l_0$ be the smallest number satisfies $\mathcal {F}_1^\sigma\subset\mathcal {F}_{l_0}$, then we get a nontrivial morphism between stable sheaves with the same slope: $\mathcal {F}_1^\sigma\rightarrow gr_{l_0}=\mathcal {F}_{l_0}/\mathcal {F}_{l_0-1},$
so it is an isomorphism, and $\mathcal {F}_1^\sigma\cap\mathcal {F}_1=0$. It follows that:
$\mathcal {F}_1^\sigma\oplus\mathcal {F}_1\subseteq\pi^*\mathcal {F}$. If we compare the original filtration with the filtration obtained by the action of $\sigma^2$ it we will get $\mathcal {F}_1^{\sigma^2}\oplus\mathcal {F}_1^\sigma\oplus\mathcal {F}_1\subseteq\pi^*\mathcal {F}$. Let us do this process and we will get the result finally.

On the other hand, after pushing forward we obtain:
$$\pi_*\pi^*(\mathcal {F})=\mathcal {F}\oplus\mathcal {F}(\mathcal {L}^{-1})\oplus\cdots\oplus\mathcal {F}(\mathcal {L}^{-n+1})=\pi_*gr_1\oplus \pi_*gr_2\oplus\cdots\oplus \pi_*gr_l.$$
Note that  $\sigma$ acts on $\mathcal {F}(\mathcal {L}^{-i})$ by multiplying $\xi^{i}$ where $\xi$ is the $n$-th root of unity, and on $\pi_*gr_1,\; i=1,\cdots,l$ transversally. Consider the map $\pi_*gr_1\rightarrow\mathcal {F}(\mathcal {L}^{-i}),a\mapsto\sum_{j=0}^{l-1} (\sigma^j)^*\xi^{ij}a$ at the generic point we will find that $\rank \pi_*gr_1=\rank \mathcal {F}$ and $\pi_*gr_1\cap \mathcal {F}\oplus\mathcal {F}(\mathcal {L}^{-1})\oplus\cdots\oplus\mathcal {F}(\mathcal {L}^{-n+2})=0$,
hence we can get an injection:
$$\pi_*gr_1\hookrightarrow\mathcal {F}(\mathcal {L}^{-n+1}),$$
whose quotient is a torsion sheaf. So we will get
$$\mu(\pi_*gr_1)\leq\mu(\mathcal {F}(\mathcal {L}^{-n+1}))<\mu(\mathcal {F}\oplus\mathcal {F}(\mathcal {L}^{-1})\oplus\cdots\oplus\mathcal {F}(\mathcal {L}^{-n+1}))=\mu(\pi_*\pi^*(\mathcal {F})),$$
which contradicts to $\mu(\pi_*\pi^*(\mathcal {F}))=\mu(\pi_*gr_1)$. So $\pi^*\mathcal {F}$ is stable.
\end{proof}

\subsection{sub-sheaves in cotangent bundle}
Let $\pi: X\rightarrow Y$ be a $n$-cyclic covering determined by a line bundle $L \in \Pic (Y)$ and a smooth divisor $B \in \Gamma(Y, L^{\otimes n})$, where $Y$ is a smooth projective variety over an algebraically closed field $k$ with $\Char k\nmid n$ . Let $B_1$ denote the reduced divisor of the inverse image $\pi^{-1}(B)$.

Now we consider a saturated sub-sheaf $\mathcal {M}_1\subset\Omega_X$ which is stable under action of $\sigma$.

Consider the following diagram
$$\xymatrix{
0\ar[r] &{\mathcal {N}_2} \ar[r] &{\mathcal{M}_2}\ar[r] &{Q_2}\ar[r] &{0}
\\
0\ar[r] &{\pi^*\Omega_{Y}} \ar[r] \ar@{->>}[u] &{\Omega_X}\ar[r]\ar@{->>}[u] &{\Omega_{X/Y}}\ar[r]\ar@{->>}[u] &{0} \\
0\ar[r] &{\mathcal {N}_1} \ar[r] \ar@{^{(}->}[u] &{\mathcal{M}_1}\ar[r]^{\varphi}\ar@{^{(}->}[u] &{Q_1}\ar[r]\ar@{^{(}->}[u] &{0}
}\leqno(3.1)\label{diag3.1}
$$
 where $\mathcal {N}_1=\pi^*\Omega_{Y}\cap\mathcal {M}_1\subset \pi^*\Omega_{Y}$ is also saturated and stable under the action of $\sigma$. By lemma \ref{thm3.1} there is a sub-sheaf $N_1^{\prime}\subset\Omega_{Y}$ such that $$\pi^*N_1^{\prime}=N_1.$$

 Let $(x_1,x_2,\cdots,x_m)$ be local coordinates on $Y$, centred at a point $x\in Y$, such that $B$ is defined by $x_1=0$. Then $(t,\pi^*x_2,\cdots,\pi^*x_m)$ is local coordinates on $X$, centred at $\pi^{-1}(x)\in X$, where $t$ is the vertical coordinate in $L$ satisfying $\pi^*x_1-t^n=0$. Locally any element of $\mathcal {M}_1$ should looks like $$t^{n-1}\sum_{i\geq2} \pi^*b_{n-1, i}\diff\pi^*x_i+\cdots+t\sum_{i\geq2} \pi^*b_{1,i}\diff\pi^*x_i+\pi^*c\diff t,\, \text{where}\, b_{j,i}, c\in \mathcal {O}_Y,$$
 where $\sigma$ acts on it by multiplying $n$-th unit root $\xi$ on $t$ and $\diff t$.

 Since $\mathcal {M}_1$ is stable under action of $\sigma$, locally it can always be generated by $1$-forms of the two kinds $$\sum_{i\geq2} \pi^*a_i\diff\pi^*x_i,\, \text{where}\, a_i\in \mathcal {O}_Y$$ and $$t\sum_{i\geq2} \pi^*b_i\diff\pi^*x_i+\pi^*c\diff t,\, \text{where}\, b_i, 0\neq c\in \mathcal {O}_Y.$$
We will call them the first and second kind generators respectively.
 It is easy to find that $\mathcal {N}_1=\mathcal {M}_1$ if and only if $\mathcal {M}_1$ can be generated by the first kind generators only.

 If $\mathcal {M}_1\neq\mathcal {N}_1$ then its generators should contain the second kind. 
And by some local calculation  we find that $Q_1$ is a sub-sheaf of $\Omega_{X/Y}$ of $\rank \,n-1$ as
$\mathcal {O}_{B_1}$-module. In this case we consider the following diagram
$$\xymatrix{
0\ar[r] &{I_B/I^2_B} \ar[r] &{\Omega_{Y}|_B}\ar[r] &{\Omega_{B}}\ar[r] &{0} \\
0\ar[r] &{I_B/I^2_B\cap\mathcal {N}_1^{\prime}|_B} \ar[r]\ar@{^{(}->}[u] &{\mathcal{N}_1^{\prime}|_B}\ar[r]\ar@{^{(}->}[u] &{\mathcal{G}}\ar[r]\ar@{^{(}->}[u] &{0}
.}\leqno(3.2)
$$
First, Multiplying a second kind generators of $\mathcal {M}_1$ by $t^{n-1}$, $$t^{n-1}[t\sum_{i\geq2} \pi^*b_i\diff\pi^*x_i+\pi^*c\diff t]=\pi^*x_1\sum_{i\geq2} \pi^*b_i\diff\pi^*x_i+\frac{1}{n}\pi^*c\diff \pi^*x_1,$$
we will get a element in $I_B/I^2_B\cap\mathcal {N}_1^{\prime}|_B$ under the restriction $\Omega_{Y}\twoheadrightarrow \Omega_{Y}|_{B}$. Conversely, for any element in $I_B/I^2_B\cap\mathcal {N}_1^{\prime}|_B$, we could get an element in $\mathcal {N}_1 $ which can factorize as a second kind generator and $t^{n-1}$. Since $\mathcal {M}_1$ is saturated, $\mathcal {M}_1$ would have the second kind generators.
So we get the following observation.
\begin{lem}\label{subsheaves}
$\mathcal {M}_1$ is not contained in $\pi^*\Omega_{Y}$ if and only if $$\mathcal {N}_1^{\prime}|_{B}\cap I_B/I^2_B\neq 0.$$
And in this case, $\rank Q_1=n-1$ as $\mathcal {O}_{B_1}$-module.
\end{lem}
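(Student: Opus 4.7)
The plan is to make rigorous the $t^{n-1}$-correspondence sketched immediately above, which relates second-kind local generators of $\mathcal{M}_1$ to local sections of $\mathcal{N}_1'|_B$ landing in the conormal direction $I_B/I_B^2\subset\Omega_Y|_B$. I work in the local coordinates fixed above, so $B=\{x_1=0\}$, $\pi^*x_1=t^n$, and in particular $\diff\pi^*x_1 = nt^{n-1}\,\diff t$.

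For the forward direction, assume $\mathcal{M}_1\not\subset\pi^*\Omega_Y$. I would first produce a second-kind local generator $\omega = \pi^*c\,\diff t + t\sum_{i\geq 2}\pi^*b_i\,\diff\pi^*x_i$ of $\mathcal{M}_1$ with $c|_B\neq 0$. If every second-kind generator had $c = x_1 c'$, then $\pi^*c = t^n\pi^*c'$ and $\omega = t\bigl(t^{n-1}\pi^*c'\,\diff t + \sum\pi^*b_i\,\diff\pi^*x_i\bigr)$ with the bracketed form already in $\pi^*\Omega_Y$; saturation of $\mathcal{M}_1$ together with $t$ being a non-zero-divisor puts the bracketed form into $\mathcal{M}_1\cap\pi^*\Omega_Y=\mathcal{N}_1$, so every generator of $\mathcal{M}_1$ would lie in $\mathcal{N}_1\subset\pi^*\Omega_Y$, contradicting the hypothesis. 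With $c|_B\neq 0$ chosen, multiplication by $t^{n-1}$ gives
\[
t^{n-1}\omega = \pi^*x_1\sum\pi^*b_i\,\diff\pi^*x_i + \tfrac{1}{n}\pi^*c\,\diff\pi^*x_1 = \pi^*\eta,\quad \eta := x_1\sum b_i\,\diff x_i + \tfrac{1}{n}c\,\diff x_1\in\Omega_Y.
\]
Since $t^{n-1}\omega\in\mathcal{M}_1\cap\pi^*\Omega_Y = \pi^*\mathcal{N}_1'$, faithfully flat descent along $\pi$ forces $\eta\in\mathcal{N}_1'$; and the restriction $\eta|_B = \tfrac{1}{n}c|_B\,\diff x_1|_B$ is a nonzero local section of $I_B/I_B^2\cap\mathcal{N}_1'|_B$, because $\diff x_1|_B$ generates $I_B/I_B^2$ inside $\Omega_Y|_B$.

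The converse reverses the computation. Take a nonzero local section $\bar\eta\in\mathcal{N}_1'|_B\cap I_B/I_B^2$ and lift it to $\eta = \sum a_i\,\diff x_i\in\mathcal{N}_1'$ near a point where $\bar\eta\neq 0$. The condition that $\bar\eta$ maps to $0$ in $\Omega_B$ forces $a_i|_B=0$ for $i\geq 2$, i.e.\ $a_i = x_1 b_i$, while $a_1|_B\neq 0$. Writing $c := a_1$ and pulling back,
\[
\pi^*\eta = \pi^*c\cdot nt^{n-1}\,\diff t + t^n\sum\pi^*b_i\,\diff\pi^*x_i = t^{n-1}\omega,\quad \omega := n\pi^*c\,\diff t + t\sum\pi^*b_i\,\diff\pi^*x_i,
\]
where $\omega$ is a second-kind form. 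Since $\pi^*\eta\in\mathcal{N}_1\subset\mathcal{M}_1$ and $t^{n-1}$ is a non-zero-divisor, saturation of $\mathcal{M}_1$ promotes $\omega$ into $\mathcal{M}_1$; and $\omega\notin\pi^*\Omega_Y$ because its $\diff t$-coefficient $n\pi^*c$ does not belong to $t^{n-1}\mathcal{O}_X$ (as $c$ is a local unit along $B$).

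For the rank assertion, note that locally $\Omega_{X/Y}\cong\bigl(\mathcal{O}_X/(t^{n-1})\bigr)\cdot\diff t$ is a free $\mathcal{O}_{B_1}$-module of rank $n-1$ with basis $\diff t, t\,\diff t,\dots,t^{n-2}\,\diff t$. The composite $\mathcal{M}_1\twoheadrightarrow Q_1\hookrightarrow\Omega_{X/Y}$ kills every first-kind generator and sends a second-kind $\omega$ to $\pi^*c\,\diff t$ (since $t\,\diff\pi^*x_i\in\pi^*\Omega_Y$). At a point of $B_1$ where $c|_B\neq 0$, $\pi^*c$ is a unit in $\mathcal{O}_X$, so $\mathcal{O}_X\cdot\pi^*c\,\diff t = \Omega_{X/Y}$, forcing $Q_1=\Omega_{X/Y}$ locally; hence $\rank Q_1 = n-1$ over $\mathcal{O}_{B_1}$. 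The main subtlety of the argument is the saturation reduction in the forward direction, converting ``$c$ vanishes on $B$'' into divisibility of $\omega$ by $t$; once this is in place, the remaining steps are routine local calculations.
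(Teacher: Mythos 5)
Your proof is correct and follows essentially the same route as the paper: the correspondence between second-kind generators of $\mathcal{M}_1$ and conormal sections of $\mathcal{N}_1'|_B$ via multiplication by $t^{n-1}$, with saturation of $\mathcal{M}_1$ used to descend/ascend along that factor. In fact you make the paper's sketch more watertight by ruling out the degenerate case where every second-kind generator has $c|_B=0$ (where the paper's recipe would only produce the zero section of $I_B/I_B^2\cap\mathcal{N}_1'|_B$) and by carrying out the local computation behind the rank claim $\rank Q_1=n-1$.
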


\subsection{stability of tangent bundle on cyclic coverings}
\begin{thm}\label{thm3.5}
Let $\pi: X\rightarrow Y$ be a $n$-cyclic covering determined by a line bundle $L \in \Pic (Y)$ and a smooth divisor $B \in \Gamma(Y, L^{\otimes n})$, where $Y$ is a smooth projective surface over an algebraically closed field $k$ with char $\Char k\nmid n$ . And let $\mathcal {H}\in \Pic(Y)$ be an ample line bundles satisfying $\mathcal {H} \equiv l B$ for some $0<l\in \mathbb{Q}$.
Assume that the  inequality holds:
$n\deg \Omega_{Y}+(n+1) \deg B\ \geq0$ and the cotangent bundle $\Omega_{Y}$ is semi-stable with respect to $\mathcal {H}$. Then $\Omega_{X}$ is semi-stable with respect to $\pi^*\mathcal {H}\in \Pic(X).$

\end{thm}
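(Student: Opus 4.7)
The plan is to argue by contradiction. Suppose $\Omega_X$ is not $\pi^*\mathcal{H}$-semi-stable, and let $\mathcal{M}_1\subset\Omega_X$ be its maximal destabilizing subsheaf, so $\mu(\mathcal{M}_1)>\mu(\Omega_X)$. Uniqueness of the maximal destabilizer forces $\mathcal{M}_1$ to be saturated and $\sigma$-invariant; since $\Omega_X$ has rank $2$ and $\mathcal{M}_1$ is proper, $\mathrm{rank}\,\mathcal{M}_1 = 1$. Using $K_X = \pi^*(K_Y\otimes L^{\otimes(n-1)})$ from Lemma~\ref{lem2.1} together with the projection formula and $n\deg L=\deg B$, I first compute $\deg\Omega_X = n\deg\Omega_Y + (n-1)\deg B$, so that $\mu(\Omega_X) = \tfrac{1}{2}\bigl(n\deg\Omega_Y + (n-1)\deg B\bigr)$.

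Next I would insert $\mathcal{M}_1$ into diagram (3.1), setting $\mathcal{N}_1 = \pi^*\Omega_Y \cap \mathcal{M}_1$ and $Q_1 = \mathcal{M}_1/\mathcal{N}_1 \subset \Omega_{X/Y}$. Saturation of $\mathcal{M}_1$ in $\Omega_X$ passes to saturation of $\mathcal{N}_1$ in $\pi^*\Omega_Y$, and $\sigma$-invariance is inherited, so Galois descent (Theorem~\ref{thm3.1}) furnishes a saturated subsheaf $N_1'\subset\Omega_Y$ with $\mathcal{N}_1 = \pi^*N_1'$. The proof then splits according to whether $\mathcal{M}_1 = \mathcal{N}_1$ or not. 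In the easy case $\mathcal{M}_1 \subset \pi^*\Omega_Y$, descent applies to $\mathcal{M}_1$ itself, giving $\mathcal{M}_1 = \pi^*N_1'$; the semi-stability of $\Omega_Y$ then yields $\mu(\mathcal{M}_1) = n\mu(N_1') \leq n\mu(\Omega_Y) = \mu(\pi^*\Omega_Y) < \mu(\Omega_X)$, the last strict inequality coming from $(n-1)\deg B > 0$, a contradiction.

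The decisive case is $\mathcal{M}_1 \not\subset \pi^*\Omega_Y$, where I invoke Lemma~\ref{subsheaves} twice. On the vertical side, $Q_1$ has generic $\mathcal{O}_{B_1}$-rank $n-1$, which matches $\Omega_{X/Y}$; since $c_1(\Omega_{X/Y}) = (n-1)B_1$ and $\Omega_{X/Y}/Q_1$ is then supported in codimension $2$, we get $\deg Q_1 = (n-1) B_1 \cdot \pi^*\mathcal{H} = (n-1)\deg B$. On the horizontal side, the condition $N_1'|_B \cap I_B/I_B^2 \neq 0$ produces a generically injective map of rank-one sheaves $N_1'|_B \hookrightarrow I_B/I_B^2 = L^{-n}|_B$ on the smooth curve $B$, so $c_1(N_1')\cdot B \leq -B^2$; combined with $\mathcal{H} \equiv l B$ and $\deg B = l B^2$, this yields $\deg N_1' \leq -\deg B$, and hence $\deg\mathcal{N}_1 = n\deg N_1' \leq -n\deg B$. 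Adding the two contributions,
\begin{equation*}
\mu(\mathcal{M}_1) = \deg\mathcal{N}_1 + \deg Q_1 \leq -n\deg B + (n-1)\deg B = -\deg B,
\end{equation*}
and the hypothesis $n\deg\Omega_Y + (n+1)\deg B \geq 0$ rearranges exactly to $-\deg B \leq \mu(\Omega_X)$, delivering the desired contradiction.

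The real obstacle is this second case. It is Lemma~\ref{subsheaves} which simultaneously converts the geometric assumption "$\mathcal{M}_1$ is not contained in $\pi^*\Omega_Y$" into the vertical gain $\deg Q_1 = (n-1)\deg B$ and the matching horizontal loss $\deg N_1' \leq -\deg B$ forced by the conormal direction of $B$. The two tight estimates combine to the single bound $-\deg B$, which explains why the numerical assumption $n\deg\Omega_Y + (n+1)\deg B \geq 0$ is exactly the sharp threshold appearing in the theorem.
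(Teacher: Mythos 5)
Your proposal is correct and follows the paper's own argument essentially step for step: contradiction via the (unique, hence $\sigma$-invariant, saturated) maximal destabilizer, Galois descent to $N_1'\subset\Omega_Y$, the case split on $\mathcal{M}_1\subset\pi^*\Omega_Y$, and in the decisive case the same two estimates $\deg Q_1=(n-1)\deg B$ and $\deg N_1'\leq-\deg B$ coming from Lemma~\ref{subsheaves} and the conormal bundle $I_B/I_B^2=L^{-n}|_B$, combining to $\mu(\mathcal{M}_1)\leq-\deg B\leq\mu(\Omega_X)$. The only differences are expository (you justify $\deg Q_1$ via $c_1(\Omega_{X/Y})=(n-1)B_1$ and a codimension-two quotient, where the paper cites the rank statement of the lemma directly).
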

\begin{proof}
%

Suppose that $\Omega_{X}$ is not semi-stable, then there is a unique maximally destabilizing sub-line bundle $\mathcal {M}_1\subset \Omega_{X}$ which is saturated and stable under the action of $\sigma$.

If $\mathcal {M}_1\subset\pi^*\Omega_{Y}\subset\Omega_{X}$,  then by lemma \ref{thm3.2} $\pi^*\Omega_{Y}$ is semi-stable and $\mu(\mathcal {M}_1)\leq\mu(\pi^*\Omega_{Y})<\mu(\Omega_{X})$ which  contradicts the property of the maximally destabilizing sub-sheaves $\mathcal {M}_1\subset \Omega_{X}$.

Now we have $\mathcal {M}_1\nsubseteq \pi^*\Omega_{Y}$.
By lemma \ref{subsheaves} $\rank\ I_B/I^2_B\cap\mathcal {N}_1^{\prime}|_B=\rank\mathcal {N}_1^{\prime}=1$ as $\mathcal {O}_{B}$-mod. So $\mathcal{G}\subset \Omega_{B}$ is a torsion sheaf as $\mathcal {O}_{B}$-mod and it is zero.
Therefore we have $$\mathcal {N}_1^{\prime}|_B\subseteq I_B/I^2_B$$
and
\begin{eqnarray*}
 \deg(\mathcal {N}_1^{\prime}|_B)\leq \deg (I_B/I^2_B)=\frac{-\deg B}{l}\label{ineq}
\end{eqnarray*}
and it is easy to check the following (in)equalities

\begin{eqnarray*}
\mu(\mathcal {M}_1)&=&\deg \mathcal {N}_1+(n-1)\deg B_1 \qquad  \text{(by the last line in diagram \ref{diag3.1} and lemma \ref{subsheaves})}\\
&=&n\deg \mathcal {N}_1^{\prime}+(n-1)\deg B
=n l\deg \mathcal {N}_1^{\prime}|_B+(n-1)\deg B\\
&\leq& -\deg B\hspace{30mm}  \text{(by the above inequality)}\\
&\leq&\frac{n\deg \Omega_{Y}+(n-1)\deg B}{2} \hspace{6mm}\text{(by the condition in the theorem)}   \\
&=&\mu(\Omega_{X})\hspace{32mm}(\text{by the second line in diagram \ref{diag3.1} and} \deg B=\deg B_1).
\end{eqnarray*}
We also get a contraction to the assumption. So $\Omega_{X}$ is  semi-stable with respect to $\pi^*\mathcal {H}.$
\end{proof}

\begin{thm}\label{thm3.6}
With the same conditions in Theorem \ref{thm3.5}, if the strict inequality holds: $n\deg \Omega_{Y}+(n+1) \deg B\ >0$ and the cotangent bundle $\Omega_{Y}$ is semi-stable with respect to $\mathcal {H}$, then $\Omega_{X}$ is stable with respect to $\pi^*\mathcal {H}\in \Pic(X)$.
\end{thm}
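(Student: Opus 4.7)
By Theorem~\ref{thm3.5} we already have semi-stability of $\Omega_X$ with respect to $\pi^*\mathcal{H}$, so the task is to rule out a proper saturated sub-sheaf of slope $\mu(\Omega_X)$. Suppose for contradiction that a rank-$1$ saturated sub-sheaf $E\subset\Omega_X$ with $\mu(E)=\mu(\Omega_X)$ exists. The plan is to promote $E$ to a $\sigma$-invariant destabilizer $\mathcal{M}_1$ and then repeat the chain of inequalities in the proof of Theorem~\ref{thm3.5}, using the strict hypothesis to upgrade the weak bound ``$\mu(\mathcal{M}_1)\leq\mu(\Omega_X)$'' there into a strict ``$<$'', contradicting $\mu(\mathcal{M}_1)=\mu(\Omega_X)$.

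For the promotion I would appeal to the socle of $\Omega_X$ in the abelian category of semi-stable sheaves of slope $\mu(\Omega_X)$, namely the sum of all rank-$1$ stable sub-sheaves of that slope; being canonical, it is $\sigma^*$-stable. If the socle has rank $1$, take $\mathcal{M}_1$ to be the socle. If it has rank $2$, then $\Omega_X$ is polystable, say $\Omega_X\simeq L_1\oplus L_2$ with stable $L_i$ of slope $\mu(\Omega_X)$, and one inspects the $\sigma^*$-action on the set of rank-$1$ stable sub-line bundles: if $L_1\not\simeq L_2$ and $\sigma^*$ fixes each summand, take $\mathcal{M}_1=L_1$; if $L_1\simeq L_2$, the sub-line bundles in this isomorphism class form a $\mathbb{P}^1_k$ on which $\sigma^*$ acts through $\mathrm{PGL}_2(k)$, and this action has a fixed point since $k$ is algebraically closed; the remaining sub-case, where $L_1\not\simeq L_2$ are swapped by $\sigma^*$ (which forces $n$ even), would be dealt with by descending through the intermediate cover $X/\langle\sigma^2\rangle\to Y$ to a $2$-cyclic situation to which the earlier analysis reduces.

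Once $\mathcal{M}_1$ is in hand, the proof of Theorem~\ref{thm3.5} runs with only one modification. If $\mathcal{M}_1\subset\pi^*\Omega_Y$, Theorem~\ref{thm3.2} gives semi-stability of $\pi^*\Omega_Y$, so
\[
\mu(\mathcal{M}_1)\leq\mu(\pi^*\Omega_Y)=\frac{n\deg\Omega_Y}{2}<\frac{n\deg\Omega_Y+(n-1)\deg B}{2}=\mu(\Omega_X),
\]
the strict step using $\deg B>0$ (a consequence of $\mathcal{H}\equiv lB$ being ample with $l>0$). If $\mathcal{M}_1\not\subset\pi^*\Omega_Y$, Lemma~\ref{subsheaves} applies and the same chain of inequalities as in Theorem~\ref{thm3.5} produces $\mu(\mathcal{M}_1)\leq-\deg B$; the strict hypothesis $n\deg\Omega_Y+(n+1)\deg B>0$ rearranges to $-\deg B<\mu(\Omega_X)$. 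In either case $\mu(\mathcal{M}_1)<\mu(\Omega_X)$, contradicting $\mu(\mathcal{M}_1)=\mu(\Omega_X)$.

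The main obstacle is the first step---producing a $\sigma$-invariant destabilizer---specifically the polystable sub-case in which $\sigma^*$ exchanges two non-isomorphic stable rank-$1$ summands, where the $\mathbb{P}^1_k$ fixed-point argument is unavailable and one must pass to an intermediate cover. Once the reduction to the $\sigma$-invariant case is secured, all the remaining inequalities are routine strict upgrades of those already appearing in the proof of Theorem~\ref{thm3.5}.
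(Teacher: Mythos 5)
Your reduction to a $\sigma$-invariant destabilizer works in two of your three sub-cases (socle of rank one, and $L_1\simeq L_2$ via a fixed point of an automorphism of $\mathbb{P}^1_k$), and in those cases the strict upgrade of the inequalities from Theorem \ref{thm3.5} is correct. But the remaining sub-case --- $\Omega_X=\mathcal{M}_1\oplus\mathcal{M}_1^{\sigma}$ with the two stable rank-one summands exchanged by $\sigma$ --- is precisely the substance of the theorem, and your proposed treatment of it is a gap, not a proof. Descending to the intermediate cover $X/\langle\sigma^2\rangle\to Y$ does not obviously make sense for a sub-sheaf of $\Omega_X$: the Galois descent you have available (Theorem \ref{thm3.1}) applies to invariant sub-sheaves of a \emph{pullback} of a locally free sheaf, and $\Omega_X$ is not the pullback of the cotangent bundle of $X/\langle\sigma^2\rangle$. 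More importantly, even granting a reduction to $n=2$, you land in exactly the same configuration --- two slope-$\mu(\Omega_X)$ line bundles swapped by the involution, with no invariant destabilizer --- so ``the earlier analysis,'' which is predicated on invariance, has nothing to say about it; the obstacle is not the size of $n$ but the absence of any invariant sub-sheaf to descend.

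The paper closes this case with an argument your proposal does not contain: it pushes the diagram (3.1) forward by $\pi$, observes that $\pi_*\mathcal{N}_1\oplus\pi_*\mathcal{N}_1^{\sigma}$ sits inside $\pi_*\pi^*\Omega_Y=\bigoplus_{i}\Omega_Y(\mathcal{L}^{-i})$, and compares ranks (using the sub-sheaf generated by $a+\sigma^*(a)$) to force $n=2$; it then derives a numerical contradiction by computing $\mu(\pi_*\pi^*\Omega_Y)$ in two ways --- directly as $\mu(\Omega_Y)-\tfrac{1}{2}\deg\mathcal{L}$, and via the strict bounds $\mu(\pi_*\mathcal{N}_1)<\mu(\Omega_Y)-\deg\mathcal{L}$ and $\mu(\pi_*\mathcal{N}_2)<\mu(\Omega_Y)$ obtained from the torsion quotients together with a Grothendieck--Riemann--Roch formula for $\mu(\pi_*\mathcal{E})$. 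Some such push-forward (or otherwise genuinely new) argument is required for the swapped case; as written, your proof establishes stability only when a $\sigma$-invariant maximal destabilizer happens to exist.
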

\begin{proof}
We know that $\Omega_X$ is semi-stable by Theorem \ref{thm3.5}.
We suppose that $\Omega_X$  is not stable with respect to $\pi^*\mathcal {H}$, then there is a Jordan-H\"{o}lder filtration:
$$0\subset\mathcal {M}_1\subset\Omega_X\ \text{ with }\ \mu(\mathcal {M}_1)=\mu(\Omega_X).$$
Now we get a saturated sub-line bundle $\mathcal {M}_1$. By the same argument as Theorem \ref{thm3.5}  we know that $\mathcal {M}_1^\sigma\neq\mathcal {M}_1$, then we get another saturated stable sub-line bundle $\mathcal {M}_1^\sigma$ with $\mu(\mathcal {M}_1^\sigma)=\mu(\Omega_X)$. Since the graded factors of Jordan-H\"{o}lder filtration are uniquely determined, the filtration splits: $$\Omega_X=\mathcal {M}_1\oplus \mathcal {M}_1^\sigma.$$

Now we want to prove that there is no such decomposition.
Since $\pi_*$ is exact we have the following diagram by pushing forward diagram \ref{diag3.1}:
$$\xymatrix{
0\ar[r] &{\pi_*\mathcal {N}_2} \ar[r] &{\pi_*\mathcal{M}_1^\sigma}\ar[r] &{\pi_*Q_2}\ar[r] &{0}
\\
0\ar[r] &{\pi_*\pi^*\Omega_{Y}} \ar[r]\ar@{->>}[u] &{\pi_*\Omega_X}\ar[r]\ar@{->>}[u] &{\pi_*\Omega_{X/Y}}\ar[r]\ar@{->>}[u] &{0} \\
0\ar[r] &{\pi_*\mathcal {N}_1} \ar[r]\ar@{^{(}->}[u] &{\pi_*\mathcal{M}_1}\ar[r]\ar@{^{(}->}[u] &{\pi_*Q_1}\ar[r]\ar@{^{(}->}[u] &{0.}
}
$$
where $\mathcal {N}_1=\pi^*\Omega_{Y}\cap\mathcal {M}_1$ and $\mathcal {N}_2=\pi^*\Omega_{Y}/\mathcal {N}_1$.
Note that
$$\pi_*\mathcal {N}_1\oplus \pi_*\mathcal {N}_1^{\sigma}\subseteq\pi_*\pi^*(\Omega_{Y})=\Omega_{Y}\oplus\Omega_{Y}(\mathcal {L}^{-1})\cdots\oplus\Omega_{Y}(\mathcal {L}^{\otimes(-n+1)}).$$
 where $\mathcal {N}_1^{\sigma}=\pi^*\Omega_{Y}\cap \mathcal {M}_1^\sigma$ and $\sigma$ acts on  $\Omega_{Y}(\mathcal {L}^{-i})$ by multiplying $\xi^{i}$, where $\xi$ is the $n$-th root of unity, and acts $\pi_*\mathcal {N}_1$ onto $\pi_*\mathcal {N}_1^{\sigma}$.
 Consider the sub-sheaf generated by $a+\sigma^*(a),\, a\in \pi_*\mathcal {N}_1$  whose rank is equal to $\rank \pi_*\mathcal {N}_1$,
 we have $\rank \Omega_{Y}=\rank \pi_*\mathcal {N}_1$ and $n=2$. So we only need to prove that such decomposition does not occur when $n=2$.

Note that $\pi_*\mathcal {N}_1\cap\Omega_{Y}=0,$
 and we  get an injection:
$$\pi_*\mathcal {N}_1\hookrightarrow\Omega_{Y}(\mathcal {L}^{-1})$$
with a nonzero torsion sheaf as its quotient when $n=2$. So we have
$$\mu(\pi_*\mathcal {N}_1)<\mu(\Omega_{Y}(\mathcal {L}^{-1}))=\mu(\Omega_{Y})-\deg (\mathcal {L}).$$

Now we relate some facts which will be used later. For any coherent sheaf $\mathcal {E}$ on $X$ we have
$$\text{deg}(\pi_*\mathcal {E})=\frac{\rank (\pi_*\mathcal {E})}{2}\text{deg}(\Omega_Y)-\frac{\rank (\mathcal {E})}{2}\text{deg}(\Omega_X)+\text{deg}(\mathcal {E}),$$ which can be calculated using Grothendieck-Riemann-Roch theorem. Note that $\rank(\pi_*\mathcal {E})=n\  \rank(\mathcal {E})$, we have
$$\mu(\pi_*\mathcal {E})=\mu(\Omega_Y)-\frac{1}{n}\mu(\Omega_X)+ \frac{1}{n}\mu(\mathcal {E})$$
if $\rank (\mathcal {E})>0$. By Lemma \ref{lem2.1} ii) we can get $\mu(\Omega_X)=\mu(\pi^*\Omega_Y)+\deg (\pi^*\mathcal {L}^{\otimes(n-1)})=n\mu(\Omega_Y)+n(n-1)\deg (\mathcal {L})$, so we have
$$\mu(\pi_*\mathcal {E})=\frac{1}{n}\mu(\mathcal {E})-(n-1)\deg (\mathcal {L})$$ if $\rank(\mathcal {E})>0$.

Next we will get a contradiction by calculating the slope of sheaves.

When $n=2$, by the above facts we have $$\mu(\pi_*\mathcal {N}_2)=\frac{1}{2}\mu(\mathcal {N}_2)-\deg (\mathcal {L})$$
and since the quotient $Q_2$ is torsion, $\mu(\mathcal {N}_2)<\mu(\mathcal {M}_1^{\sigma})=\mu(\Omega_X)$. So we have$$\mu(\pi_*\mathcal {N}_2)<\frac{1}{2}\mu(\Omega_X)-\deg (\mathcal {L})=\mu(\Omega_Y).$$


finally we conclude that
$$\mu(\pi_*\pi^*(\Omega_Y))=\frac{1}{2}(\mu(\pi_*\mathcal {N}_1)+\mu(\pi_*\mathcal {N}_2))<
\frac{1}{2}(\mu(\Omega_Y)-\text{deg}(\mathcal {L})+\mu(\Omega_{Y}))=\mu(\Omega_Y)-\frac{1}{2}\text{deg}(\mathcal {L}),$$
but $\mu(\pi_*\pi^*(\Omega_{Y}))=\mu(\Omega_{Y}\oplus\Omega_{Y}(\mathcal {L}^{-1}))=\mu(\Omega_Y)-\frac{1}{2}\text{deg}(\mathcal {L})$. So there is no such decomposition and $\Omega_X$ is stable  with respect to $\pi^*\mathcal {H}$.
\end{proof}

Next, we relate the following well known result.
\begin{thm}\label{thm3.7}(\cite[1.4]{HDLM})
Let $k$ be an algebraically closed field, and $\Omega^1_{\mathbb{P}^N}$ is the cotangent bundle on $\mathbb{P}^N_k$. Then $\Omega^1_{\mathbb{P}^N}$ is stable with respect to $\mathcal {O}_{\mathbb{P}^N}(1)$.
\end{thm}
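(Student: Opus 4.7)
The approach is to derive stability from a cohomological vanishing theorem for twisted exterior powers of the cotangent bundle on $\mathbb{P}^N$, combined with the standard determinant-of-a-subsheaf trick.

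First, from the Euler sequence
\begin{equation*}
0 \to \Omega^1_{\mathbb{P}^N} \to \mathcal{O}_{\mathbb{P}^N}(-1)^{\oplus(N+1)} \to \mathcal{O}_{\mathbb{P}^N} \to 0,
\end{equation*}
one reads off $\deg \Omega^1_{\mathbb{P}^N} = -(N+1)$ with respect to $\mathcal{O}(1)$, so $\mu(\Omega^1_{\mathbb{P}^N}) = -(N+1)/N$. I then fix a saturated subsheaf $\mathcal{F} \subset \Omega^1_{\mathbb{P}^N}$ of rank $r$ with $0 < r < N$ and aim to show $\mu(\mathcal{F}) < \mu(\Omega^1_{\mathbb{P}^N})$. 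Taking the $r$-th exterior power of the inclusion yields a nonzero map
\begin{equation*}
\det \mathcal{F} \hookrightarrow \Lambda^r \Omega^1_{\mathbb{P}^N} = \Omega^r_{\mathbb{P}^N}.
\end{equation*}
Writing $\det \mathcal{F} = \mathcal{O}_{\mathbb{P}^N}(a)$ with $a = \deg \mathcal{F}$, this amounts to the existence of a nonzero global section of $\Omega^r_{\mathbb{P}^N}(-a)$.

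I then invoke the Bott vanishing $H^0(\mathbb{P}^N, \Omega^r_{\mathbb{P}^N}(k)) = 0$ for $k \leq r$ (valid in every characteristic for $r \geq 1$), which forces $-a \geq r+1$, i.e.\ $a \leq -(r+1)$. Hence $\mu(\mathcal{F}) = a/r \leq -1 - 1/r$, and since $r < N$ gives $1/r > 1/N$ one concludes
\begin{equation*}
\mu(\mathcal{F}) \leq -1 - \frac{1}{r} < -1 - \frac{1}{N} = \mu(\Omega^1_{\mathbb{P}^N}),
\end{equation*}
which is the strict inequality required for stability.

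The main obstacle is the Bott vanishing itself. The plan for it is to argue inductively on $r$ from the short exact sequence
\begin{equation*}
0 \to \Omega^r_{\mathbb{P}^N} \to \mathcal{O}_{\mathbb{P}^N}(-r)^{\oplus \binom{N+1}{r}} \to \Omega^{r-1}_{\mathbb{P}^N} \to 0
\end{equation*}
obtained by applying $\Lambda^r$ to the Euler sequence (using that the quotient $\mathcal{O}_{\mathbb{P}^N}$ has rank one). For $k < r$ the middle term has no sections, so the claim is immediate; the boundary case $k = r$ requires identifying the induced map $k^{\oplus \binom{N+1}{r}} \to H^0(\mathbb{P}^N, \Omega^{r-1}_{\mathbb{P}^N}(r))$ and checking it is injective, which can be done explicitly in terms of the standard generating sections of the Euler sequence. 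Once this vanishing is in hand, the rest of the proof is pure slope bookkeeping, and the saturation hypothesis on $\mathcal{F}$ is used only to pass freely between $\mathcal{F}$ and its determinant line bundle.
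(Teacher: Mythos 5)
The paper offers no proof of this statement---it is imported verbatim as a citation to Huybrechts--Lehn---so there is no internal argument to compare against; what you have written is the classical proof (essentially the one in the cited reference and in Okonek--Schneider--Spindler), and it is correct in substance. The slope bookkeeping is right: a saturated rank-$r$ subsheaf $\mathcal F$ gives a nonzero, hence injective, map $\det\mathcal F=\mathcal O(a)\to\Omega^r_{\mathbb P^N}$, the vanishing $H^0(\Omega^r_{\mathbb P^N}(k))=0$ for $k\le r$ forces $a\le-(r+1)$, and $\mu(\mathcal F)\le-1-1/r<-1-1/N=\mu(\Omega^1_{\mathbb P^N})$ since $r<N$. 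The only point you defer is the one that actually carries the content, namely the boundary case $k=r$ of the induction: injectivity of $\Lambda^r V=H^0(\mathcal O^{\oplus\binom{N+1}{r}})\to H^0(\Omega^{r-1}_{\mathbb P^N}(r))$ is literally the assertion $H^0(\Omega^r_{\mathbb P^N}(r))=0$, so it cannot be waved away; but it does close as you suggest. Composing with the inclusion $\Omega^{r-1}(r)\hookrightarrow\Lambda^{r-1}V\otimes\mathcal O(1)$ from the next Euler-type sequence identifies the map on global sections with $\omega\mapsto\sum_i\iota_{e_i^*}(\omega)\otimes x_i$, i.e.\ the signed comultiplication $\Lambda^rV\to\Lambda^{r-1}V\otimes V^*$; in the standard bases each $e_I$ maps to a $\pm1$-combination of basis vectors $e_{I\setminus j}\otimes e_j^*$, and distinct $I$ hit disjoint sets of basis vectors, so the map is injective over any field. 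This last remark is worth making explicit because the theorem is asserted over an arbitrary algebraically closed field, and an appeal to ``Bott's formula'' as usually stated is a characteristic-zero statement, whereas your Euler-sequence induction is genuinely characteristic-free. With that step written out, the proof is complete.
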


Then we get the following result using Theorem \ref{thm3.7}.
\begin{cor}\label{cor3.8}
Let $k$ be an algebraically closed field with  $\Char k\nmid n$ and $\pi: X\rightarrow \mathbb{P}^2_k$ be a $n$-cyclic covering branched along a smooth curve $B$ in $\mathbb{P}^2_k$. Then $\Omega_X$ is semi-stable with respect to $\pi^*\mathcal {O}_{\mathbb{P}^N}(1)$, and it is stable with respect to $\pi^*\mathcal {O}_{\mathbb{P}^N}(1)$ when $n=2$ and $\deg B\geq 4$ or $n>2$.

\end{cor}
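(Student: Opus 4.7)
The plan is to apply Theorem \ref{thm3.5} and Theorem \ref{thm3.6} with $Y=\mathbb{P}^2_k$ and $\mathcal{H}=\mathcal{O}_{\mathbb{P}^2}(1)$. Theorem \ref{thm3.7} gives that $\Omega_{\mathbb{P}^2}$ is stable, in particular semi-stable, with respect to $\mathcal{H}$, so the stability hypothesis on $\Omega_Y$ is satisfied. Since $\Pic(\mathbb{P}^2)\cong\mathbb{Z}$, I would write $\mathcal{L}=\mathcal{O}(d)$ with $d\geq 1$ (positivity is forced by the existence of a smooth $B\in|\mathcal{L}^{\otimes n}|$), so that $\deg B=nd$; then $\mathcal{H}\equiv\frac{1}{nd}B$ numerically, which verifies the proportionality $\mathcal{H}\equiv lB$ in Theorem \ref{thm3.5} with $l=1/(nd)\in\mathbb{Q}_{>0}$.

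Next I would evaluate the numerical quantity governing (semi-)stability. With respect to $\mathcal{H}$ one has $\deg\Omega_{\mathbb{P}^2}=-3$ and $\deg B=nd$, so
\[
n\deg\Omega_Y+(n+1)\deg B=-3n+n(n+1)d=n\bigl((n+1)d-3\bigr).
\]
For any $d\geq 1$ and $n\geq 2$, $(n+1)d\geq n+1\geq 3$, so this quantity is $\geq 0$, with equality exactly when $(n,d)=(2,1)$, i.e., $B$ is a smooth conic. Theorem \ref{thm3.5} therefore yields the semi-stability of $\Omega_X$ with respect to $\pi^*\mathcal{O}_{\mathbb{P}^2}(1)$ in every case.

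For the stability statement I would verify the strict inequality $(n+1)d>3$. When $n\geq 3$, $(n+1)d\geq n+1\geq 4$ for all $d\geq 1$, so Theorem \ref{thm3.6} applies directly. When $n=2$ the condition $3d>3$ is equivalent to $d\geq 2$, i.e., $\deg B=2d\geq 4$, and Theorem \ref{thm3.6} applies again. The argument is essentially a bookkeeping exercise with the numerical hypotheses; the only point worth flagging is the divisibility $n\mid\deg B$ forced by $\mathcal{O}(B)=\mathcal{L}^{\otimes n}$, which makes $d=\deg B/n\geq 1$ a positive integer and allows the reduction to the inequalities above. There is no serious obstacle: the main theorems of the paper have already absorbed all the geometric difficulty.
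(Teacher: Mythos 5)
Your proposal is correct and is exactly the argument the paper intends (the paper states Corollary \ref{cor3.8} with only the remark that it follows from Theorems \ref{thm3.5}, \ref{thm3.6} and \ref{thm3.7}, omitting the numerical bookkeeping you carry out). Your verification that $n\deg\Omega_{\mathbb{P}^2}+(n+1)\deg B=n\bigl((n+1)d-3\bigr)$ is nonnegative for all $n\geq 2$, $d\geq 1$, and strictly positive except when $(n,d)=(2,1)$, matches the stated cases precisely.
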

\begin{rmk}
In particular, when $n=2$ and $\deg B=6$ or $n=4$ and $\deg B=4$, $X$ is a K3 surface of degree $2$ or $4$ respectively. And the cotangent bundle $\Omega_X$ is stable with respect to $\pi^*\mathcal {O}_{\mathbb{P}^N}(1)$.
\end{rmk}
\section{Application}
\qquad Let $X$ be a smooth projective variety over an algebraically closed field $k$ with $\Char k=p>0$. The absolute Frobenius $F_X: X\rightarrow X$ is defined by the map: $f\mapsto f^p$ in $\mathcal {O}_X$. Let $F: X\rightarrow X_1:=X\times_kk$ denote the relative Frobenius morphism over $k$. We know that $F_*$ preserves semi-stability of vector bundles on curves of genus $g\geq1$.  When $W$ is a vector bundle on a higher dimensional variety, the instability of $F_*W$ is bounded by instability of $W\otimes {\rm T}^l(\Omega_X^1) (0\leq l\leq n(p-1))$ which is proved by X. Sun in \cite{S} and we will use this result to get our corollary.

\begin{defn}\label{defn4.1}(\cite[5.1]{J})
Let $V_0:=V=F^*(F_*W)$,
$V_1=\ker(F^*(F_*W)\surj W)$, and
$V_{l+1}:=\ker(V_{l}\xrightarrow{\nabla} V\otimes_{\mathcal {O}_X}
\Omega^1_X\to (V/V_{l})\otimes_{\mathcal {O}_X}\Omega^1_X)$
where
$\nabla: V\to V\otimes_{\mathcal {O}_X} \Omega^1_X$ is the canonical
connection with zero $p$-curvature(see \cite[Theorem 5.1]{K}). Then we have the filtration
$${0=V_{n(p-1)+1}\subset V_{n(p-1)}\subset\cdots\subset V_1\subset V_0=V=F^*(F_*W)}.$$
\end{defn}
Next, we recall the definition of ${\rm T}^l(V)$. Let $V$ be a n-dimensional vector space over $k$ with standard representation of ${\rm GL}(n)$, $S_l$ be the symmetric group with natural action on $V^{\otimes l}$ by
$(v_1\otimes\cdots\otimes
v_{l})\cdot\sigma=v_{\sigma(1)}\otimes\cdots\otimes
v_{\sigma(l)}$ for $v_i\in V$ and $\sigma\in{\rm S}_{l}$.
Let $e_1,\,\ldots,\,e_n$ be a basis of $V$, for $k_i\ge 0$ with
$k_1+\cdots+k_n=l$ then define
$${v(k_1,\ldots,k_n)=\sum_{\sigma\in{\rm S}_{l}}(e_1^{\otimes
k_1}\otimes\cdots\otimes e_n^{\otimes k_n})\cdot\sigma }.$$
Let ${\rm T}^{l}(V)\subset V^{\otimes l}$ be
the linear subspace generated by all vectors $v(k_1,\ldots,k_n)$
for all $k_i\ge 0$ satisfying $k_1+\cdots+k_n=l$.
Then we have the following property.

\begin{thm}\label{thm4.2}(\cite[Theorem 3.7]{S})
The filtration defined in Definition \ref{defn4.1} has the following properties
\begin{itemize}
\item[(i)]$\nabla(V_{l+1})\subset V_{l}\otimes\Omega^1_X$
for $l\ge 1$, and $V_0/V_1\cong W$.
\item[(ii)]
$V_{l}/V_{l+1}\xrightarrow{\nabla}(V_{l-1}/V_{l})\otimes\Omega^1_X$
are injective morphisms of vector bundles for $1\le l\le
n(p-1)$, which induced isomorphisms
$\nabla^{l}:
V_{l}/V_{l+1}\cong W\otimes_{\mathcal {O}_X}{\rm
T}^{l}(\Omega^1_X),\quad 0\le l\le n(p-1).$
\end{itemize}
\end{thm}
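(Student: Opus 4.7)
The statement has two parts. Part (i) is essentially a tautology: the definition of $V_{l+1}$ as the kernel of $V_l \xrightarrow{\nabla} V\otimes\Omega^1_X \to (V/V_l)\otimes\Omega^1_X$ forces $\nabla(V_{l+1})\subset V_l\otimes\Omega^1_X$ immediately, and the isomorphism $V_0/V_1\cong W$ is built into the definition of $V_1$ as the kernel of the adjunction counit $F^*F_*W\twoheadrightarrow W$. So all of the work is in (ii), which is about (a) injectivity of the induced map $\nabla: V_l/V_{l+1}\to (V_{l-1}/V_l)\otimes\Omega^1_X$ on successive quotients, (b) the identification with $W\otimes \mathrm{T}^l(\Omega^1_X)$, and implicitly (c) the vanishing $V_{n(p-1)+1}=0$.

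My plan is to argue locally. Pick \'etale coordinates $x_1,\ldots,x_n$ on $X$ and a local frame $w_1,\ldots,w_r$ of $W$. Then $F_*W$ has a local $\mathcal{O}_X$-basis of the form $x^\alpha w_j$ for multi-indices $\alpha$ with $0\le\alpha_i<p$, so $V=F^*F_*W$ acquires an explicit local basis of pulled-back symbols, and the canonical connection (which has zero $p$-curvature by Cartier descent, invoked from \cite[Theorem 5.1]{K}) acts by formally differentiating the $\mathcal{O}_X$-coefficients while leaving the pulled-back symbols horizontal. In this explicit model I would identify $V_l$ with the submodule whose coefficients vanish to a prescribed order in the $x_i$, so that $V_l/V_{l+1}$ becomes a quotient of forms of fixed total weight.

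With that local model in hand, I would then write down the candidate map $\nabla^l:V_l/V_{l+1}\to W\otimes \mathrm{T}^l(\Omega^1_X)$ as the $l$-fold iteration of $\nabla$ and show it is a well-defined isomorphism by matching local bases. The symmetric-group factor appears because mixed partial derivatives commute, so iterated $\nabla$ automatically lands in the symmetric part $\mathrm{T}^l(\Omega^1_X)\subset(\Omega^1_X)^{\otimes l}$; this commutation is precisely the zero-$p$-curvature condition reappearing. Injectivity at each step is forced by the definition of $V_{l+1}$ as a kernel, and surjectivity onto $W\otimes \mathrm{T}^l(\Omega^1_X)$ is checked by producing explicit preimages for the generators $v(k_1,\ldots,k_n)$ using products of the local symbols. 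Since the whole construction is canonical, it glues across coordinate patches without further argument.

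The main obstacles are twofold. The sharper one is pinning down the image as $\mathrm{T}^l(\Omega^1_X)$ rather than the full tensor power: one must carefully track the combinatorics of how symmetrization interacts with the weight filtration by $\alpha$, and verify that the multinomial coefficients appearing are invertible in characteristic $p$ for $l\le n(p-1)$ (this is where the bound $n(p-1)$ and the hypothesis $p\nmid l$-type divisibility become essential). The second obstacle is explaining why the filtration terminates exactly at $l=n(p-1)+1$: the multi-indices $\alpha$ with $0\le\alpha_i<p$ have total weight at most $n(p-1)$, and each application of $\nabla$ drops the weight by one, so beyond this range there is nothing left to descend. Once these local identifications are established and glued, both clauses of (ii) follow.
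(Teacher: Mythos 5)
First, a point of order: the paper contains no proof of this statement at all. Theorem \ref{thm4.2} is quoted verbatim from Sun \cite[Theorem~3.7]{S} (as is the surrounding Definition \ref{defn4.1}), so there is no in-paper argument to compare yours against; the only meaningful benchmark is Sun's original proof, and your sketch does follow its general route: part (i) is indeed immediate from the definition of $V_{l+1}$ as a kernel and from the surjectivity of the counit $F^*F_*W\to W$, and the substance of (ii) is a local computation with the canonical connection.

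That said, two of your formulations are off in ways that would derail the computation if carried out as written. (1) In the local model $V=\mathcal{O}_X\otimes_{\mathcal{O}_{X_1}}F_*W$ with basis $1\otimes x^{\alpha}w_j$, $0\le\alpha_i\le p-1$, the step $V_l$ is \emph{not} ``the submodule whose coefficients vanish to a prescribed order in the $x_i$'': for instance $x_1\otimes w_1-1\otimes x_1w_1$ lies in $V_1$ although its coefficients do not vanish at the centre. The correct local description (Sun's key lemma) is that $V_l$ is spanned by the elements $\prod_i(1\otimes x_i-x_i\otimes 1)^{k_i}\cdot(1\otimes x^{\alpha}w_j)$ with $\sum_i k_i\ge l$, i.e.\ by powers of the ``diagonal'' ideal applied to horizontal sections. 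It is this description that simultaneously yields that each $V_l/V_{l+1}$ is a bundle, that $\nabla$ is injective on graded pieces \emph{as a map of bundles} (the tautological kernel argument only gives injectivity as a sheaf map), that the filtration terminates at $l=n(p-1)$ (each $k_i\le p-1$, and the ranks $\sum_l\dim T^l=p^n$ add up), and that $\nabla^l$ hits exactly the symmetrized tensors $v(k_1,\dots,k_n)$. (2) There is no ``$p\nmid l$-type divisibility'' hypothesis anywhere in the statement; the only invertibility needed is that of $k_1!\cdots k_n!$ for $0\le k_i\le p-1$, which is automatic and is also precisely why $v(k_1,\dots,k_n)=0$ once some $k_i\ge p$. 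A smaller slip: the commutativity of mixed partials reflects the flatness (zero curvature) of the canonical connection; its zero $p$-curvature is a separate property used for Cartier descent, not for landing in the symmetric part. With these repairs your outline coincides with the argument in \cite{S}.
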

Let $\mathcal {H}$ be an ample line bundle on $X$. For any vector bundle $W$ on $X$, let
$${\rm I}(W,X):={\rm max}\{{\rm I}(W\otimes{\rm
T}^{l}(\Omega^1_X))\,|\,\, 0\le l\le n(p-1)\,\}$$ be the
maximal value of instabilities ${\rm I}(W\otimes{\rm
T}^{l}(\Omega^1_X))$.

\begin{thm}\label{thm4.3}(\cite[Theorem 4.8]{S}) When
$K_X\cdot{\mathcal {H}}^{n-1}\ge 0$, we have, for any $\sE\subset F_*W$,
$${\mu(F_*W)-\mu(\sE)\ge -\frac{{\rm I}(W,X)}{p}.}$$
In particular, if $W\otimes{\rm T}^{l}(\Omega^1_X)$, $0\le l\le
n(p-1)$, are semi-stable, then $F_*W$ is semi-stable. Moreover, if
$K_X\cdot{\mathcal {H}}^{n-1}>0$, the stability of the bundles
$W\otimes{\rm T}^{l}(\Omega^1_X)$, $0\le l\le n(p-1)$,
implies the stability of $F_*W$.
\end{thm}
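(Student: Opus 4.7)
The plan is to pull back everything to the Frobenius side and exploit the canonical filtration on $V := F^{\ast}(F_{\ast}W)$ from Definition \ref{defn4.1}. Since the relative Frobenius $F$ satisfies $\mu(F^{\ast}\mathcal{F}) = p\,\mu(\mathcal{F})$ for any torsion-free sheaf (computing both slopes with a common polarization $\mathcal{H}$ under the identification $X_1 \cong X$), the target inequality $\mu(F_{\ast}W) - \mu(\mathcal{E}) \ge -I(W,X)/p$ is equivalent to
\[ \mu(F^{\ast}\mathcal{E}) \;\le\; \mu(V) + I(W,X), \]
and this is the reformulated bound I would prove.

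The first step is to intersect $F^{\ast}\mathcal{E}$ with the filtration: set $\mathcal{E}_l := F^{\ast}\mathcal{E} \cap V_l$, so that $F^{\ast}\mathcal{E} = \mathcal{E}_0 \supset \mathcal{E}_1 \supset \cdots \supset \mathcal{E}_{n(p-1)+1} = 0$. Each graded piece $gr_l^{\mathcal{E}} := \mathcal{E}_l/\mathcal{E}_{l+1}$ injects into $V_l/V_{l+1} \cong W \otimes {\rm T}^l(\Omega^1_X)$ by Theorem \ref{thm4.2}(ii), whence
\[ \mu(gr_l^{\mathcal{E}}) \;\le\; \mu_{\max}\bigl(W \otimes {\rm T}^l(\Omega^1_X)\bigr) \;\le\; \mu(V_l/V_{l+1}) + I\bigl(W \otimes {\rm T}^l(\Omega^1_X)\bigr) \;\le\; \mu(V_l/V_{l+1}) + I(W,X), \]
the last step being the definition of $I(W,X)$. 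Writing $r_l := \rank(gr_l^{\mathcal{E}})$ and $r := \sum_l r_l = \rank(F^{\ast}\mathcal{E})$, summation gives
\[ \deg(F^{\ast}\mathcal{E}) \;\le\; \sum_l r_l\,\mu(V_l/V_{l+1}) + r\,I(W,X). \]

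The crux is then to upgrade this to $\sum_l r_l\,\mu(V_l/V_{l+1}) \le r\,\mu(V)$. By Cartier descent the sub $F^{\ast}\mathcal{E}$ is stable under the canonical connection $\nabla$ of zero $p$-curvature, so $\nabla$ induces injections $gr_l^{\mathcal{E}} \hookrightarrow gr_{l-1}^{\mathcal{E}} \otimes \Omega^1_X$; iterating yields $gr_l^{\mathcal{E}} \hookrightarrow gr_0^{\mathcal{E}} \otimes {\rm T}^l(\Omega^1_X)$, which forces $r_l \le r_0 \cdot \rank {\rm T}^l(\Omega^1_X)$ and makes the nonzero $r_l$ an initial segment $\{0,1,\ldots,l_{\max}\}$. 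Combined with the symmetry $\rank {\rm T}^l(\Omega^1_X) = \rank {\rm T}^{n(p-1)-l}(\Omega^1_X)$ of the canonical ranks and the hypothesis $\mu(\Omega^1_X) = (K_X \cdot \mathcal{H}^{n-1})/n \ge 0$, these constraints should control the weighted mean of $\mu(V_l/V_{l+1})$ by $\mu(V)$. This weighted-average comparison is the main technical obstacle: a priori, concentrating the weights $r_l$ at large $l$ would inflate the sum, and it is precisely the combination of $\nabla$-stability constraints with the sign condition $\mu(\Omega^1_X) \ge 0$ that closes the gap.

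Once the main inequality is in hand, the ``in particular'' clauses follow quickly. If every $W \otimes {\rm T}^l(\Omega^1_X)$ is semi-stable then $I(W,X) = 0$, so $\mu(\mathcal{E}) \le \mu(F_{\ast}W)$ for every sub-sheaf, giving semi-stability of $F_{\ast}W$. For the strict case, under $K_X \cdot \mathcal{H}^{n-1} > 0$ with all twists stable, a proper saturated $\mathcal{E} \subsetneq F_{\ast}W$ with $\mu(\mathcal{E}) = \mu(F_{\ast}W)$ would force equality in each $\mu(gr_l^{\mathcal{E}}) \le \mu(V_l/V_{l+1})$—so $gr_l^{\mathcal{E}} = V_l/V_{l+1}$ by stability of the twists—together with the maximal symmetric configuration $r_l = r_0 \cdot \rank {\rm T}^l(\Omega^1_X)$ at the extremal value $r_0 = \rank W$, forcing $F^{\ast}\mathcal{E} = V$ and hence $\mathcal{E} = F_{\ast}W$, a contradiction. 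Thus $F_{\ast}W$ is stable.
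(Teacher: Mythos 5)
This statement is quoted verbatim from Sun \cite[Theorem 4.8]{S}; the paper offers no proof of it, so the only meaningful comparison is with Sun's original argument. Your architecture does match his: pull back along $F$, use that $F^*\sE\subset V=F^*(F_*W)$ is preserved by the canonical connection, intersect with the canonical filtration, bound each graded piece by $\mu_{\max}(W\otimes{\rm T}^l(\Omega^1_X))$, and reduce everything to a weighted-average inequality $\sum_l r_l\,\mu(V_l/V_{l+1})\le r\,\mu(V)$, which (since $\mu(V_l/V_{l+1})=\mu(W)+\tfrac{l}{n}K_X\cdot\mathcal H^{n-1}$ and $\rank{\rm T}^l=\rank{\rm T}^{n(p-1)-l}$) amounts to $\sum_l\bigl(l-\tfrac{n(p-1)}{2}\bigr)r_l\le 0$ when $K_X\cdot\mathcal H^{n-1}\ge 0$.

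The gap is that this last inequality is the entire mathematical content of the theorem, and you do not prove it --- you only assert that the constraints you have collected ``should control'' it. They provably do not. Your constraints are: the nonzero $r_l$ form an initial segment, $r_l\le n\,r_{l-1}$ (from $gr_l^{\sE}\hookrightarrow gr_{l-1}^{\sE}\otimes\Omega^1_X$), $r_l\le r_0\cdot\rank{\rm T}^l(\Omega^1_X)$, and the symmetry of the ranks $b_l:=\rank{\rm T}^l(\Omega^1_X)$. Take $n=2$, $p=3$, $\rank W=1$, so $m=n(p-1)=4$ and $(b_0,\dots,b_4)=(1,2,3,2,1)$. The vector $(r_0,\dots,r_4)=(1,1,2,2,1)$ satisfies every one of your constraints, yet $\sum_l(2-l)r_l=2+1+0-2-2=-1<0$, which would violate the target bound. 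So some further structural input is indispensable; in Sun's paper this is the combinatorial Lemma 4.6, which extracts from the injections $V_l/V_{l+1}\hookrightarrow(V_{l-1}/V_l)\otimes\Omega^1_X$ the stronger monotonicity-type relation between the $r_l$ and the $b_l$ (morally, that $r_l/b_l$ cannot increase with $l$), from which the weighted-average inequality follows by pairing $l$ with $m-l$. Without that lemma, both the semi-stability claim and your equality analysis for the stability clause are unsupported. The rest of your reductions (the equivalence via $\mu(F^*\sF)=p\,\mu(\sF)$, the $\nabla$-invariance of $F^*\sE$, the injectivity of the induced maps on graded pieces, and the deduction of the two ``in particular'' statements once the main inequality is granted) are correct.
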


When $X$ is a surface we have a better description of ${\rm T}^{l}(\Omega^1_X)$.
\begin{thm}\label{thm4.4}(\cite[Theorem 3.8]{S})
When ${\rm dim}(X)=2$, we have
$$V_{l}/V_{l+1}=\left\{
\begin{array}{llll} W\otimes{\rm Sym}^{l}(\Omega^1_X) &\mbox{when $l<p$}\\
W\otimes{\rm
Sym}^{2(p-1)-l}(\Omega^1_X)\otimes\omega_X^{l-(p-1)}&\mbox{when
$l\ge p$}
\end{array}\right.$$
\end{thm}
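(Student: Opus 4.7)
The plan is to reduce the statement to a purely representation-theoretic analysis of $T^{l}(V)$ for a two-dimensional vector space $V$: by Theorem~\ref{thm4.2}(ii) we already have $V_{l}/V_{l+1} \cong W \otimes T^{l}(\Omega^{1}_{X})$, and the construction of $T^{l}$ is natural in $V$, so the stated formula will come from identifying $T^{l}(V)$ explicitly as a $GL(V)$-module and then specializing $V = \Omega^{1}_{X}$ with $\det(V) = \omega_{X}$ and tensoring by $W$.

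In the first range $l < p$, the scalar $l!$ is a unit in $k$, so the symmetrization map $\mathrm{Sym}^{l}(V) \to V^{\otimes l}$ sending $v_{1}\cdots v_{l} \mapsto \sum_{\sigma \in S_{l}} v_{\sigma(1)} \otimes \cdots \otimes v_{\sigma(l)}$ is $GL(V)$-equivariant and injective, and its image is by construction precisely the span of the generators $v(k_{1},k_{2})$. Hence $T^{l}(V) \cong \mathrm{Sym}^{l}(V)$, which globalizes to the first case. In the second range $p \le l \le 2(p-1)$, the generator $v(k_{1},k_{2})$ equals $k_{1}! \, k_{2}!$ times the sum over distinct permutations of $e_{1}^{\otimes k_{1}} \otimes e_{2}^{\otimes k_{2}}$, hence vanishes as soon as $k_{1} \ge p$ or $k_{2} \ge p$. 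The nonzero generators are therefore indexed by $k_{1} \in [\, l-(p-1),\, p-1\,]$, giving $\dim T^{l}(V) = 2(p-1)-l+1$, which matches $\dim \mathrm{Sym}^{2(p-1)-l}(V) \otimes \det(V)^{l-(p-1)}$. I would then exhibit an explicit $GL(V)$-equivariant map
\[
\mathrm{Sym}^{2(p-1)-l}(V) \otimes \det(V)^{l-(p-1)} \longrightarrow T^{l}(V),\qquad e_{1}^{j_{1}}e_{2}^{j_{2}} \otimes (e_{1}\wedge e_{2})^{l-(p-1)} \longmapsto c\, v\bigl(j_{1}+l-(p-1),\, j_{2}+l-(p-1)\bigr)
\]
for a suitable constant $c$, and conclude that it is an isomorphism.

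The main obstacle is checking $GL(V)$-equivariance of this map in the second case: one has to verify that the above formula intertwines the natural $GL(V)$-action on the symmetric power (twisted by $\det^{l-(p-1)}$) with the induced action on the generators $v(k_{1},k_{2})$, and this requires a direct expansion using the transformation law for elementary tensors under a basis change. Once equivariance is in hand, the argument closes quickly: throughout the range $l \ge p$ one has $2(p-1)-l < p$, so $\mathrm{Sym}^{2(p-1)-l}(V)$ is a simple $GL(V)$-module, and any nonzero equivariant map between modules of equal finite dimension is an isomorphism. Globalizing with $V = \Omega^{1}_{X}$, $\det(V) = \omega_{X}$, and tensoring by $W$ then yields the stated identification of $V_{l}/V_{l+1}$.
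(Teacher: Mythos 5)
This statement is quoted verbatim from Sun \cite[Theorem 3.8]{S}; the paper under review gives no proof of it, so your attempt can only be judged on its own merits. Your overall skeleton is reasonable and your first case is fine: for $l<p$ the symmetrization map $\mathrm{Sym}^l(V)\to V^{\otimes l}$ is well defined, equivariant, has image exactly ${\rm T}^l(V)$, and is injective because the scalars $k_1!\,k_2!$ are units. Your dimension count in the range $p\le l\le 2(p-1)$ is also correct, as is the observation that the source $\mathrm{Sym}^{2(p-1)-l}(V)\otimes\det(V)^{l-(p-1)}$ is simple, so a nonzero equivariant map to a module of the same dimension would finish the argument.

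The genuine gap is exactly at the step you flag as ``the main obstacle'': the explicit map you write down, $e_1^{j_1}e_2^{j_2}\otimes(e_1\wedge e_2)^{l-(p-1)}\mapsto c\,v(j_1+l-(p-1),\,j_2+l-(p-1))$ with a \emph{single} constant $c$, is not $GL(V)$-equivariant, so the verification you defer would fail. Concretely, take $p=3$, $l=3$, so the claim is ${\rm T}^3(V)\cong V\otimes\det(V)$, and let $g$ be the unipotent element with $ge_1=e_1$, $ge_2=e_1+e_2$. Then $g$ fixes $e_1\wedge e_2$ and sends $e_2\otimes(e_1\wedge e_2)$ to $e_1\otimes(e_1\wedge e_2)+e_2\otimes(e_1\wedge e_2)$, so your map sends this to $c\,v(2,1)+c\,v(1,2)$; but expanding $g\cdot v(1,2)=\sum_{\sigma}(e_1\otimes(e_1+e_2)\otimes(e_1+e_2))\cdot\sigma$ gives $v(3,0)+2v(2,1)+v(1,2)=2v(2,1)+v(1,2)$, which forces $2c=c$, i.e.\ $c=0$. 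The actual intertwiner in this example is $e_1\otimes(e_1\wedge e_2)\mapsto v(2,1)$, $e_2\otimes(e_1\wedge e_2)\mapsto -v(1,2)$; in general the coefficients must depend on $(j_1,j_2)$ (signs and binomial-type factors), and producing and verifying them is the real content of the second case, not a routine check. A cleaner route that avoids this entirely: note that ${\rm T}^l(V)$ is the image of the equivariant norm map $\mathrm{Sym}^l(V)\to\Gamma^l(V)$, hence is a quotient of $\mathrm{Sym}^l(V)$ of the computed dimension $2(p-1)-l+1$ with highest weight $(p-1,\,l-(p-1))$; since it is a quotient of $\mathrm{Sym}^{l}(V)$ whose head in this range is the simple module $L(p-1,\,l-(p-1))=\mathrm{Sym}^{2(p-1)-l}(V)\otimes\det(V)^{l-(p-1)}$ of that same dimension, the identification follows without writing any explicit map. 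As it stands, though, your proof is incomplete at its central step.
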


Now using the above results we get the following corollary.
\begin{cor}
Let $\pi:X\rightarrow\mathbb{P}^2$ be a $n$-cyclic covering over an algebraically closed field $k$ with $\Char k\nmid n$ branched along a smooth curve $B$ of degree $d$ and $\mathcal {H}=\pi^*\mathcal {O}(1)$. Then for any torsion free sheaf $\sL$ of rank $1$,
$F_*\sL$ is semi-stable if $d\geq5$ or $n=4$ and $d\geq4$. Moreover, $F_*W$ is stable if
 $n=2$ and $d\geq6$ or $n\geq3$ and $d\geq5$.
\end{cor}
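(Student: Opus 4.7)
The strategy is to apply Sun's Frobenius direct image criterion (Theorem~\ref{thm4.3}) with $W=\sL$. Two inputs are needed: the numerical condition $K_X\cdot\mathcal{H}\geq 0$ (strict for stability), and the (semi-)stability of the twisted bundles $\sL\otimes T^l(\Omega^1_X)$ for every $0\leq l\leq 2(p-1)$.

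For the numerical condition, Lemma~\ref{lem2.1}(ii) gives
$$K_X=\pi^*\bigl(K_{\mathbb{P}^2}\otimes L^{\otimes(n-1)}\bigr)=\pi^*\mathcal{O}\!\Bigl(-3+\tfrac{(n-1)d}{n}\Bigr)$$
(using $n\mid d$, which is forced by $L\in\Pic(\mathbb{P}^2)$), and the projection formula yields $K_X\cdot\mathcal{H}=(n-1)d-3n$. A short case analysis confirms the required sign under each branch of the hypotheses ($d\geq 5$, or $n=4$ and $d\geq 4$, for the semi-stability case; the strict inequality in the stability case).

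For the (semi-)stability input, since $\sL$ is a line bundle, tensoring with $\sL$ preserves (semi-)stability, so it suffices to treat $T^l(\Omega^1_X)$ itself. Theorem~\ref{thm4.4} identifies this, up to a line-bundle twist, with $\mathrm{Sym}^m(\Omega^1_X)$ for some $0\leq m\leq p-1$. By Corollary~\ref{cor3.8}, $\Omega^1_X$ is (semi-)stable with respect to $\pi^*\mathcal{O}(1)$ precisely under the listed hypotheses on $(n,d)$.

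The main obstacle is propagating (semi-)stability from $\Omega^1_X$ to all of its symmetric powers $\mathrm{Sym}^m(\Omega^1_X)$ with $m<p$, a step which in characteristic $p$ is not formal. I would handle it by combining the rank-$2$ structure of $\Omega^1_X$ with Galois descent (Theorem~\ref{thm3.1}): given a saturated destabilizing sub-sheaf of $\mathrm{Sym}^m(\Omega^1_X)$, one forms its $\sigma$-invariant hull and descends along $\pi$ to a destabilizing sub-sheaf of $\mathrm{Sym}^m(\Omega^1_{\mathbb{P}^2})$; since $\Omega^1_{\mathbb{P}^2}$ is strongly (semi-)stable on $\mathbb{P}^2$, all of its symmetric powers are (semi-)stable, yielding the required contradiction. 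Once the (semi-)stability of $\sL\otimes T^l(\Omega^1_X)$ is secured for every $l$, Theorem~\ref{thm4.3} immediately delivers the (semi-)stability of $F_*\sL$.
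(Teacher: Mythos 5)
Your overall architecture (numerical condition via Lemma \ref{lem2.1}, reduce to $T^l(\Omega^1_X)$ via Theorem \ref{thm4.4} since $\sL$ has rank $1$, then feed everything into Theorem \ref{thm4.3}) matches the paper's proof exactly, and your computation $K_X\cdot\mathcal{H}=(n-1)d-3n$ together with the divisibility $n\mid d$ correctly accounts for the case split. The problem is the step you yourself flag as the main obstacle: your proposed Galois-descent argument for the semi-stability of $\mathrm{Sym}^m(\Omega^1_X)$ does not work. Theorem \ref{thm3.1} descends a $\sigma$-invariant saturated subsheaf of $p^*\mathcal{F}$ to a subsheaf of $\mathcal{F}$; it requires the \emph{ambient} sheaf to be a pullback from the base. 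But $\mathrm{Sym}^m(\Omega^1_X)$ is not $\pi^*\mathrm{Sym}^m(\Omega^1_{\mathbb{P}^2})$ --- indeed $\Omega^1_X$ itself is not a pullback (by Lemma \ref{lem2.1}(ii) even its determinant $K_X=\pi^*(K_Y\otimes L^{\otimes(n-1)})$ has a different degree from $\pi^*K_Y$, and the natural map $\pi^*\Omega_Y\to\Omega_X$ has torsion cokernel $\Omega_{X/Y}$ supported on $B_1$). This is precisely why the paper cannot deduce Theorems \ref{thm3.5}--\ref{thm3.6} from Theorem \ref{thm3.2} and instead carries out the delicate local analysis of Section 3.2 controlling the degree correction along $B_1$, under the numerical hypothesis $n\deg\Omega_Y+(n+1)\deg B\geq 0$. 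A destabilizing subsheaf of $\mathrm{Sym}^m(\Omega^1_X)$ therefore cannot be descended to a destabilizing subsheaf of $\mathrm{Sym}^m(\Omega^1_{\mathbb{P}^2})$ (and the slopes would not match even if it could), so your contradiction never materializes.

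The paper closes this step differently: having already established in Corollary \ref{cor3.8} that $\Omega^1_X$ itself is (semi-)stable, it invokes the standard low-height fact that in characteristic $p$ the symmetric powers $\mathrm{Sym}^l$ of a semi-stable bundle of rank $2$ remain semi-stable for $l<p$ (the representations $\mathrm{Sym}^l$ of $\mathrm{GL}_2$ have low height in this range), which combined with Theorem \ref{thm4.4} gives the semi-stability of every $T^l(\Omega^1_X)$, hence $\mathrm{I}(\sL\otimes T^l(\Omega^1_X))=0$. If you replace your descent step by this citation (or by any direct proof that semi-stability of a rank-$2$ bundle propagates to $\mathrm{Sym}^l$ for $l<p$), the rest of your argument goes through as written.
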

\begin{proof}
First, by Lemma \ref{lem2.1} we know that $K_X\cdot{\mathcal {H}}\geq0$ if $d\geq5$ or $n=4$ and $d\geq4$ and $K_X\cdot{\mathcal {H}}> 0$ if $n=2$ and $d\geq6$ or $n\geq3$ and $d\geq5$.
And we have proved that $\Omega_X^1$ is stable in Corollary \ref{cor3.8}, so ${\rm Sym}^{l}(\Omega^1_X)$ is semi-stable when $l<p$. Then ${\rm T}^{l}(\Omega^1_X)$ is semi-stable by Theorem \ref{thm4.4} and ${\rm I}(\sL\otimes{\rm T}^{l}(\Omega^1_X))={\rm I}({\rm T}^{l}(\Omega^1_X))=0$ since $\sL$ is of rank $1$. At last by using Theorem \ref{thm4.3} we obtain our result.
\end{proof}

\begin{bibdiv}
\begin{biblist}
\bib{BPV}{book}{
   author={Barth, Wolf P.},
   author={Hulek, Klaus},
   author={Peters, Chris A. M.},
   author={Van de Ven, Antonius},
   title={Compact complex surfaces},
   series={Ergebnisse der Mathematik und ihrer Grenzgebiete. 3. Folge. A
   Series of Modern Surveys in Mathematics [Results in Mathematics and
   Related Areas. 3rd Series. A Series of Modern Surveys in Mathematics]},
   volume={4},
   edition={2},
   publisher={Springer-Verlag, Berlin},
   date={2004},
   pages={xii+436},
   isbn={3-540-00832-2},
   review={\MR{2030225}},
}
\bib{FR}{book}{
   author={Friedman, Robert},
   title={Algebraic surfaces and holomorphic vector bundles},
   series={Universitext},
   publisher={Springer-Verlag, New York},
   date={1998},
   pages={x+328},
   isbn={0-387-98361-9},
   review={\MR{1600388}},
}
\bib{GD}{article}{
   author={Gieseker, D.},
   title={On a theorem of Bogomolov on Chern classes of stable bundles},
   journal={Amer. J. Math.},
   volume={101},
   date={1979},
   number={1},
   pages={77--85},
   issn={0002-9327},
   review={\MR{527826}},
}
\bib{HD}{book}{
   author={Huybrechts, Daniel},
   title={Lectures on K3 surfaces},
   series={Cambridge Studies in Advanced Mathematics},
   volume={158},
   publisher={Cambridge University Press, Cambridge},
   date={2016},
   pages={xi+485},
   isbn={978-1-107-15304-2},
   review={\MR{3586372}},
}
\bib{HDLM}{book}{
   author={Huybrechts, Daniel},
   author={Lehn, Manfred},
   title={The geometry of moduli spaces of sheaves},
   series={Cambridge Mathematical Library},
   edition={2},
   publisher={Cambridge University Press, Cambridge},
   date={2010},
   pages={xviii+325},
   isbn={978-0-521-13420-0},
   review={\MR{2665168}},
}
\bib{J}{article}{
   author={Joshi, Kirti},
   author={Ramanan, S.},
   author={Xia, Eugene Z.},
   author={Yu, Jiu-Kang},
   title={On vector bundles destabilized by Frobenius pull-back},
   journal={Compos. Math.},
   volume={142},
   date={2006},
   number={3},
   pages={616--630},
   issn={0010-437X},
   review={\MR{2231194}},
}
\bib{K}{article}{
   author={Katz, Nicholas M.},
   title={Nilpotent connections and the monodromy theorem: Applications of a
   result of Turrittin},
   journal={Inst. Hautes \'Etudes Sci. Publ. Math.},
   number={39},
   date={1970},
   pages={175--232},
   issn={0073-8301},
   review={\MR{0291177}},
}
\bib{S}{article}{
   author={Sun, Xiaotao},
   title={Direct images of bundles under Frobenius morphism},
   journal={Invent. Math.},
   volume={173},
   date={2008},
   number={2},
   pages={427--447},
   issn={0020-9910},
   review={\MR{2415312}},
}
\bib{Zhang}{article}{
   author={Zhang, Yongming},
   title={On the stability of tangent bundle on double coverings},
   journal={Acta Math. Sin. (Engl. Ser.)},
   volume={33},
   date={2017},
   number={8},
   pages={1039--1047},
   issn={1439-8516},
   review={\MR{3668639}},
}
\end{biblist}
\end{bibdiv}
\end{document}